\newtheorem{thm}{Theorem}
\theoremstyle{definition}
\newtheorem{defi}{Definition}
\theoremstyle{plain}
\theoremstyle{definition}
\theoremstyle{plain}
\newtheorem{lemma}[thm]{Lemma}
\newtheorem{prop}[thm]{Proposition}
\newtheorem{cor}[thm]{Corollary}
\theoremstyle{remark}
\newtheorem{rem}{Remark}
\theoremstyle{plain}
\theoremstyle{remark}
\newtheorem{rems}[thm]{Remarks}
\newcommand{\iso}{\overset{\sim}{\rightarrow}}
\newcommand{\ad}{\operatorname{ad}}
\newcommand{\nc}{\newcommand}
\nc{\Spec}{\operatorname{Spec}}
\nc{\Index}{\operatorname{Index}}
\nc{\GKdim}{\operatorname{GKdim}}
\nc{\Stab}{\operatorname {Stab}}
\nc{\Fract}{\operatorname {Fract}}
\nc{\End}{\operatorname {End}}
\def\q{\mathfrak q}
\def\h{\mathfrak h}
\def\g{\mathfrak g}
\def\a{\mathfrak a}
\def\s{\mathfrak s}
\def\l{\mathfrak l}
\def\b{\mathfrak b}
\title{ADAPTED PAIRS AND WEIERSTRASS SECTIONS
\footnote{Work supported in part by Israel Science Foundation Grant No.  710724.}}
\author{Florence Fauquant-Millet 
\footnote{email : florence.millet@univ-st-etienne.fr}
and Anthony Joseph
\footnote{email : anthony.joseph@weizmann.ac.il}}
\date{March 2015}
\begin{document}

\maketitle

\begin{abstract}

Adapted pairs and Weierstrass sections are central to the invariant theory associated to the action of an algebraic Lie algebra $\mathfrak a$ on a finite dimensional vector space $X$. In this $\mathfrak a$ need not be a semisimple Lie algebra.  Here their general properties are described particularly when $\mathfrak a$ is the canonical truncation of a biparabolic subalgebra of a simple Lie algebra and $X=\mathfrak a^*$.

\end{abstract}

Keyword :
Adapted pairs, invariants, Weierstrass sections.

\

AMS Classification:  17B35


\section{Introduction.}\label{1}

Throughout this paper the base field $\bf k$ is assumed algebraically closed of characteristic zero and
$\a$ denotes an algebraic Lie algebra over $\bf k$ with $\bf A$ its adjoint group. Let $S(\mathfrak a)$ denote the symmetric algebra of $\mathfrak a$ and set $Y(\mathfrak a)=S(\mathfrak a)^{\bf A}$ the algebra of polynomials on $\a^*$ invariant under the action of $\bf A$.

\subsection{}\label{1.1}


Suppose for the moment that $\mathfrak a$ is semisimple. Then there are some remarkable classical theorems describing $Y(\mathfrak a)$.  First the theorem of Chevalley \cite {C} which asserts that $Y(\mathfrak a)$ is polynomial.  Secondly the theorem of Kostant \cite {Ko} which provides a ``linearisation" of the generators of $Y(\mathfrak a)$.
The first result depended on the triangular decomposition of $\mathfrak a$ and further made a crucial use of the Weyl group $W$.  The second used the existence of a principal $\mathfrak {sl}_2$ triple.  These structures are special to semisimple Lie algebras and so for a long time it was not suspected that similar results could hold for other families of Lie algebras.

\

The above situation changed dramatically following \cite {FJ0} which provided a huge and varied family of algebraic Lie algebras $\mathfrak a$ for which $Y(\mathfrak a)$ is polynomial.  Indeed it was shown that $Y(\mathfrak a)$ is polynomial for almost any ``truncated" parabolic subalgebra of a simple Lie algebra $\mathfrak g$ in particular for all truncated parabolics when $\mathfrak g$ is of type $A$ or type $C$.  Moreover the invariant generators turn out to be extremely complicated expressions even for $\mathfrak g$ of type $A$.  They are known explicitly in only a few cases. Shortly after this result was extended to biparabolic subalgebras (which are the intersection of two parabolic Lie subalgebras of $\g$ whose sum is $\g$) in \cite {J0}, \cite {J0.1}.  By convention we exclude all biparabolics which are reductive (for example $\g$ itself).

\

Following this work it was shown (\cite{J1}) for all truncated biparabolics in type $A$ that one can linearize the invariant generators in the sense of Kostant even though an appropriate $\mathfrak {sl}_2$ triple fails to exist.

\

 Partly inspired by our work, Panyushev, Premet and Yakimova \cite {PPY} showed that $Y(\mathfrak a)$ is polynomial for almost any centraliser $\a=\mathfrak g^x: x \ \text {nilpotent}$ in a simple Lie algebra $\mathfrak g$ in particular for all cases when $\mathfrak g$ of type $A$ or type $C$.  From our present point of view this case is not so interesting as $Y(\mathfrak g^x)$ is less mysterious and indeed is obtained (in the good cases) as the last non-vanishing partial derivatives of elements of  $Y(\mathfrak g)$ with respect to the second nilpotent element of the $\mathfrak {sl}_2$ triple containing $x$.  Moreover linearisation of $Y(\mathfrak g^x)$ in type $A$ follows rather closely that for $Y(\mathfrak g)$ itself \cite {J4}.

\

Generally truncated biparabolics and centralizers are quite different.  Again our methods are quite different to those of \cite {PPY} even though both work perfectly in types $A$ and $C$ but  fail in general.  However one might remark that there is one case when a centralizer $\mathfrak g^x$ is a truncated parabolic, namely when $x$ is a highest weight vector (for the adjoint representation).  Then $Y(\mathfrak g^x)$ can be shown with some extra efforts \cite {PPY}, \cite {J4} to be polynomial outside type $E_8$, whilst for type $E_8$, it fails to be polynomial \cite {Y}.

\subsection{}\label{1.2}

Previous to our work Popov \cite {P} had considered the problem of linearisation of invariant generators in the case when $\mathfrak a$ acts reductively on a finite dimensional vector space $X$.
Here one may assume without loss of generality that $\mathfrak a $ is semisimple.

Define a linear subvariety of $X$ to be a subset of the form $y+V$, with $y \in  X$ and $V$ a subspace of $X$.

Following Popov \cite [2.2.1] {P} define a Weierstrass section for the action of $\mathfrak a$ on $X$ to be a linear subvariety $y+V$ of $X$ such that the restriction of $S(X^*)$ to $y+V$ induces an isomorphism of $Y(X^*):=S(X^*)^{\bf A}$ onto the algebra $R[y+V]$ of regular functions on $y+V$. Of course the existence of a Weierstrass section implies the  polynomiality of $Y(X^*)$, which is itself a rather rare phenomenon.

 Let $X_{reg}$ denote the set of regular elements of $X$, that is to say all $y \in X$ with stabilizer $\mathfrak a^y$ in $\mathfrak a$ of minimal dimension called the index $\ell_X(\a)$ of $\a$ relative to $X$.  One may remark that $X_{reg}$ is open dense in $X$.
 
Assume that  $\mathfrak a$ is a simple Lie algebra and that $X$ is a simple $\mathfrak a$ module.  By a case by case verification Popov \cite [2.2.10]{P} showed that there exists $y \in X_{reg}$ and a semisimple endomorphism $h \in \mathfrak a$ of $X$ such that $h.y =-y$.  However this may fail if $X$ is not a simple module \cite [2.2.16, Example 3]{P}. In the case that it holds a Weierstrass section obtains as $y+V$ where $V$ is an $h$-stable complement to $\a.y$ in $X$.

 Some further example of Weierstrass sections occur in the work of Reeder, Levy,Yu and Gross \cite {RLYG} coming from the Dynkin gradation $\oplus_{i \in \mathbb Z} \mathfrak g_i$ of a semisimple Lie algebra $\mathfrak g$.  Here $\mathfrak a=\mathfrak g_0$ which is of course reductive and $X=\mathfrak g_1$.

\subsection{}\label{1.3}

The notion of a Weierstrass section immediately extends to any finite dimensional Lie algebra $\mathfrak a$, though we shall assume that $\mathfrak a$ is algebraic as this is more relevant to questions concerning invariants. Moreover it is natural to assume in this context that $S(X^*)$ admits no proper semi-invariants, which implies that $(\Fract S(X^*))^{\bf A} = \Fract S(X^*)^{\bf A}$.  Thus under these assumptions it follows from a theorem of Chevalley-Rosenlicht (\cite {R} and see \cite [Lemme 7]{Dix0}) that the Gelfand-Kirillov dimension (or growth rate) of the invariant algebra $S(X^*)^{\bf A}$ is just $\ell_X(\mathfrak a)$.

We denote $\ell_{\mathfrak a^*}(\mathfrak a)$ by $\ell(\mathfrak a)$, called the index of $\mathfrak a$.

Again we define an adapted pair for the action of $\a$ on $X$ to be a pair $(h,\,y) \in \mathfrak a \times X_{reg}$ satisfying $h.y=-y$. Since $\a$ is algebraic, we can and do assume that $h$ is a semisimple endomorphism of $X$.

Given an adapted pair $(h,\,y)$ for the action of $\a$ on $X$, then under the hypotheses of the first paragraph we have a natural candidate for Weierstrass section following \ref {1.2}.  Indeed let $V$ be an $h$-stable complement to $\mathfrak a.y$ in $X$.  Then $\dim V=\ell_X(\mathfrak a)$ and the restriction of $Y(X^*)$ to $y+V$ is an injection $\varphi$ of $Y(X^*)$ into $R[y+V]$.  A fortiori the restriction $\Phi$ of $Y(X^*)$ to $\textbf{k}y+V$ is injective with image contained in $R[\textbf{k}y+V]^h$.  On the other hand it is not at all obvious if $\varphi$ is also surjective.  Notice that $\Phi$ is a map of graded vector spaces.

If $\mathfrak a$ is reductive, then following \cite {J0.01} one may define a Letzter map $\mathscr L$ of $R[\textbf{k}y+V]^h$ into $Y(X^*)$ which is again a map of graded vector spaces. Suppose $\mathscr L$ is injective.  Then $\Phi\mathscr L \in \End R[\textbf{k}y+V]^h$ is injective on each graded subspace and since these are finite dimensional it must also be surjective. Thus $\Phi$ and hence $\varphi$ is surjective.    Seemingly to prove the required injectivity is not easy and indeed in the co-adjoint case (that is when $X=\a^*$) it would give an alternative and quite different proof of the Chevalley and Kostant theorems mentioned in \ref {1.1}.

On the other hand if $\a$ is not reductive, the Letzter map cannot be constructed and we have much less reason to believe that $\varphi$ is surjective even in the co-adjoint case as we further discuss in the next section.

\subsection{}\label{1.4}

From now on we assume that $X=\mathfrak a^*$, that is to say we just consider co-adjoint action. Recall the index of $\mathfrak a$ (denoted $\ell(\mathfrak a)$) is defined to be the $\dim \mathfrak a^\eta:\eta \in \mathfrak a^*_{reg}$.  Given $x \in \a$, we let $\ad x$ denote the map $y\mapsto [x,y], \forall y \in \a$, that is to say the adjoint action of $\a$ on itself , as well as its transpose that is to say the co-adjoint action of $\a$ on $\a^*$. An adapted pair, resp. a Weierstrass section, for the action of $\a$ on $\a^*$ will be called simply an adapted pair, resp. a Weierstrass section, for $\a$.

We further assume that $S(\mathfrak a)$ has no proper semi-invariants, that is to say that $Y(\a)$ is equal to the Poisson semicentre $Sy(\a)$ of $S(\a)$, which is the vector space generated by the semi-invariants of $S(\a)$ under the action of $\a$.  This is the case when $\a$ is the canonical truncation (see \cite[Def. 7.1, Sec. B, Chap. I]{F}) of an algebraic Lie algebra $\b$ and moreover one has $ Sy(\b)=Y(\a)=Sy(\a)$. This result is referred to as a generalization of a lemma of Borho in \cite{RV}. In addition one has
 $$ \dim \mathfrak b + \ell(\mathfrak b) = \dim \mathfrak a +  \ell (\mathfrak a).$$

 The above is reviewed in \cite {OV} and \cite [Prop. 9.7, Sec. B, Chap. I]{F}.

Assume that $Y(\mathfrak a)$ is polynomial.  Then extending an observation in \cite {FJ2}, Ooms and Van den Bergh \cite {OV} established a sum rule on the degrees of the homogeneous generators of $Y(\mathfrak a)$.  In \cite {JS} we gave a simpler and slightly more general result based on ideas of Panyushev and furthermore used this technology to establish the surjectivity of $\varphi$ defined in \ref{1.3} for an adapted pair $(h,\,y)$ for $\a$ and $V$ defined as in \ref{1.3}.   On the other hand dropping the assumption that $Y(\mathfrak a)$ is polynomial, we found that in a slightly wider context surjectivity can fail. Our hope had been to construct an adapted pair to explain Yakimova's counter-example described in \ref {1.1}.

Further use of this technology will be made here.

Call an algebraic Lie algebra $\a$ \textit{regular} if $S(\a)$ admits no proper semi-invariants and if $Y(\a)$ is polynomial (necessarily on $\ell(\a)$ generators).

\subsection{}\label{1.5}

Following (\cite[7.3]{J2}) a slice $\mathscr S$ to the action of ${\bf A}$ on $\a^*$ is
a locally closed subvariety of $\a^*$ such that the following two conditions are satisfied :\par

$(i)$ ${\bf A}. \mathscr S$ is dense in $\a^*$ \par
$(ii)$ every orbit in ${\bf A}. \mathscr S$ cuts $\mathscr S$ at exactly one point and then transversally. 

 (The latter means that, for all $s\in\mathscr S$, denoting by $T_{s,\,\mathscr S}$ the tangent space in $\mathscr S$ at $s$, the sum  $\a. s + T_{s,\,\mathscr S}$ is direct, see \cite[7.1]{J2}).

Following \cite[7.4]{J2} a slice $\mathscr S$ to the action of ${\bf A}$ on $\a^*$ is called affine if its closure
$\overline{\mathscr S}$ is a linear subvariety of $\a^*$, that is to say of the form $y+V$ as described in \ref {1.2}.

A linear subvariety $y+V$ of $\a^*$ will be called an affine slice for $\a$ if $y+V$ itself (and not just a locally closed subset $\mathscr S$ of $\a^*$ such that $\overline{\mathscr S}=y+V$) satisfies conditions $(i)$ and $(ii)$ above.

Note that, if the locally closed subset $\mathscr S$ of $\a^*$ is a slice with closure $\overline{\mathscr S}=y+V$ then $y+V$ satisfies also condition $(i)$ but not necessarily condition $(ii)$ above (see \ref{7.11.7}).

Assume in the rest of this subsection that $S(\a)$ admits no proper semi-invariants.

Lemma \ref{7.11.2} shows that a Weierstrass section $y+V$ for $\a$ is an affine slice for $\a$.  Notably this does not need $y \in \mathfrak a^*$ to be regular.

The question of the converse is much more delicate. Following \cite [7.6]{J2} we say that $y+V$ is a rational slice to the action of ${\bf A}$ on $\a^*$ if the restriction of $Y(\a)$ to $y+V$ is injective and induces an isomorphism of rings of fractions.  By \cite [7.7]{J2} $y+V$ gives rise to an affine slice if and only if it is a rational slice. Furthermore if $\mathscr S$ is a slice to the action of ${\bf A}$ on $\a^*$  with closure $y+V$, then the restriction of functions is an injection from $Y(\a)$ into $R[y+V]$. Moreover if we identify $Y(\mathfrak a)$ with its image in $R[y+V]$, then (through \cite [7.4]{J2})  there exists $d\in Y(\a)$ such that $Y(\a)[d^{-1}]=R[y+V][d^{-1}]$.  (We shall recap some of this theory here since it was rather poorly presented in \cite [Sect. 7]{J2}).

On the other hand even when $y$ is regular and hence part of an adapted pair, we do not expect (see \ref {1.4}), under these equivalent conditions on $y+V$, that $Y(\mathfrak a)$ is polynomial.  Again even under the additional assumption that $Y(\a)$ is polynomial we cannot say that an affine slice for $\a$ is a Weierstrass section for $\a$.  Indeed we are only able to assert this under some further assumption(s) (\ref {7.11.3} - \ref {7.11.5}) the simplest of which is that $(y+V)\setminus (y+V)_{reg}$ has codimension $\geq 2$.

\subsection{}\label{1.6}

Since we are just considering the co-adjoint action of $\a$, it is convenient to replace $y$ above by the corresponding Greek letter $\eta$.

Let $(h,\,\eta)$ be an adapted pair for $\a$.  Again  we shall denote by $V$ an $h$-stable complement to $\a.\eta$ in $\a^*$. One checks that duality induces a non-degenerate pairing of $V$ with $\a^\eta$. The eigenvalues of $h$ on $V$ are called the exponents of $\a$ relative to $\eta$.  Then, since $\eta$ is regular, the number of exponents (counted with multiplicities) is just $\ell(\a)$.  On the other hand it is not obvious if the exponents themselves are independent of $\eta$.

 Now suppose that $\a$ is regular.  Then we have the remarkable fact \cite [Cor. 2.3]{JS} that exponents are the degrees of the homogeneous generators minus one and in particular independent of the choice of $\eta$.  This gives a way of showing that the existence of an adapted pair does not imply regularity.  It was discussed in \cite {JS}.


\subsection {}\label{1.7}

The construction of an adapted pair is an elementary question of linear algebra, but an extremely difficult one.  In \ref {7.8} we give it a more geometric interpretation and speak of equivalence classes of adapted pairs for truncated biparabolics.

Let $\a$ be a truncated biparabolic subalgebra of a simple Lie algebra $\g$.  If $\g$ is of type $A$ or type $C$, then $\a$ is regular.  If $\g$ is of type $A$ then $\a$ admits an adapted pair $(h,\,\eta)$ \cite {J1}.  This was a difficult result and $\eta$ can only be described in terms of meanders. Moreover rather many equivalence classes were constructed and so far there is no way to decide if all were found.  In \cite {FJ3} it was shown for any adapted pair $(h,\eta)$ constructed in \cite {J1} the second element $\eta$ is always an image of a \textit{regular nilpotent} element of $\mathfrak g^*$.  This was a very difficult result. Significantly it allows one to bring the Weyl group into the description of these adapted pairs.  Moreover it allows one to rationalize the existence of a Weierstrass section $\eta+V$ when $\eta$ is not regular \cite {J2}.

A truncated biparabolic in type $C$ need not admit an adapted pair, not even a Weierstrass section \cite {J2}, for example the truncated Borel subalgebra.

By contrast one may conjecture that a biparabolic subalgebra always admits an adapted pair; but this is less interesting.  Rather trivially a Frobenius Lie algebra always admits an adapted pair.  A biparabolic subalgebra may be Frobenius and already in type $A$ their number grows rather rapidly. A Borel subalgebra is Frobenius if and only if $-1$  belongs to the Weyl group, for example in type $C$.
%
\subsection{}\label{1.8}
 Let $Y(\mathfrak a)_+$ be the span of homogeneous vectors of $Y(\a)$ of degree $>0$.
Let $\mathscr N(\a)\subset\a^*$ be
 the zero set  of the ideal $S(\mathfrak a)Y(\mathfrak a)_+$ of $S(\mathfrak a)$ .  Since $S(\mathfrak a)Y(\mathfrak a)_+$ has homogeneous generators, it follows that
$\mathscr N(\a)$ is stable under the multiplicative group $\bf {k}^*$ of the field $\bf k$. Since $\bf {k}^*$ is a connected algebraic group, each irreducible component of
$\mathscr N(\a)$ is $\bf{ k}^*$ stable.

 Suppose $\eta \in \mathfrak a^*$ satisfies $(\ad h)(\eta)=\lambda \eta$, for some
 $h \in \mathfrak a,\, \lambda \in \bf {k}^*$.  Take $p \in Y(\mathfrak a)_+$ homogeneous of positive degree $d$.  Then $0=((\ad h)(p))(\eta)=-d\lambda p(\eta)$,
which forces $p(\eta)=0$, that is
$\eta\in\mathscr N(\a)$.  In particular if $(h,\eta)$ is an adapted pair for $\a$, then $\eta \in \mathscr N(\a)_{reg}:=\mathscr N(\a)\cap\a^*_{reg}$.  On the other hand there is no guarantee that $\mathscr N(\a)_{reg}$ is non-empty.

It is clear that $\bf{A}$ acts by simultaneous conjugation on the set of adapted pairs for $\a$.  It also acts on $\mathscr N(\a)$.
In \ref{defequivAP} we give a definition of equivalent adapted pairs for $\a$ involving only the second terms of the adapted pairs.
When $\a$ is a regular truncated biparabolic, we show (\ref{equivpara}) that equivalent adapted pairs for $\a$ in the above sense are also equivalent in the sense of simultaneous conjugation by $\bf A$ and a simple criterion is given to determine when two adapted pairs for $\a$ are equivalent, namely Corollary \ref {7.9}.

  In Proposition \ref {7.6} we show, when $\a$ is regular, that the map $(h,\eta) \mapsto \overline{\bf{A}.\eta}$ is a bijection of equivalence classes of adapted pairs for $\a$ (in the first sense) onto the irreducible components of the variety $\mathscr N(\a)$ admitting a dense orbit.  
  Hence, when $\a$ is moreover a truncated biparabolic, this bijection involves the equivalence classes of adapted pairs in the second sense.

\subsection{}\label{1.9}

Let $(h,\eta)$ be an adapted pair for $\a$.  An intriguing question is whether the eigenvalues of $\ad h$ on $\mathfrak a$ are all integer-valued.  In \ref {7.10} we note that this can be false when $\mathfrak a$ is Frobenius.  However when $\a$ is regular there are no known counter-examples.  A main difference in this latter case is that the eigenvalues of $h$ on $\a^\eta$ are (negative) integers.  In \ref {7.10} we use this result to show that if $\mathfrak a$ is a truncated biparabolic subalgebra in a simple Lie algebra $\mathfrak g$ of type $A$, then indeed this question has a positive answer.



\section{Adapted pairs}\label{2}

 Recall the definition of an adapted pair $(h,\eta)$ for $\a$ given in \ref{1.3}, using the notation of \ref {1.6}.

\subsection{Equivalent adapted pairs.}

\subsubsection{Definition}\label{defequivAP}

Let $(h,\,\eta)$ be an adapted pair for $\a$. For all $a\in{\bf A}$, the pair $(a.h,\,a.\eta)$ is also an adapted pair for $\a$.

\begin{defi}
Let $(h,\,\eta)$ and $(h',\,\eta')$ be two adapted pairs for $\a$. We say that these adapted pairs are equivalent if there exists an element
$a\in{\bf A}$ such that $a.\eta=\eta'$.

\end{defi}

\begin{rem}
It would be more appropriate to say that two adapted pairs $(h,\,\eta)$ and $(h',\,\eta')$ are equivalent if and only if they belong to the same orbit according to the diagonal action or simultaneous conjugation by ${\bf A}$, that is to say if and only if there exists $a\in{\bf A}$ such that
$\eta'=a.\eta$ and $h'=a.h$.
Actually we will show this in~\ref{equivpara} when $\a$ is the canonical truncation $\q_{\Lambda}$ of a biparabolic subalgebra of $\g$ simple. But for this, we need first to give a presentation of $\eta$ (see  \ref{7.1} $(*)$ below) and of the vector space $V$ defined in \ref {1.6} (see \ref{2.2.2} $(**)$ below), when $(h,\,\eta)$ is an adapted pair for $\q_{\Lambda}$.

\end{rem}

\subsubsection{}\label{7.6}

Now we will give the bijection announced in \ref{1.8}.

  Let $A$ be a finitely generated commutative algebra. Suppose $a \in A$ is 
 a zero divisor (resp. non invertible and non-divisor of zero). Then by Krull's theorem $\GKdim A/aA$ equals $ \GKdim A$ (resp. $\GKdim A -1$).  Thus if $I$ is an ideal of $A$ 
generated by $s$ algebraically independent generators, the irreducible components of its zero variety all have codimension at 
most $s$. For example taking $A={\bf k}[x,\,y]$, the zero set of the ideal generated by 
$x^2,\,xy$ has a component of dimension $1$ and a component of dimension $0$.

Assume that $\a$ is regular.

  It follows that any irreducible component $C$ of 
 $\mathscr N(\a)$ has dimension at least $d(\mathfrak a):=\dim \mathfrak a - \GKdim {Y}(\mathfrak a)$. For the opposite inequality we need to know that the tangent space $T_{c,\,C}$ at some $c \in C$ has dimension at most $d(\mathfrak a)$.

Let $\{x_i\}_{i=1}^{\dim \mathfrak a}$ be a basis for $\mathfrak a$. Given $f \in S(\mathfrak a)$ its differential $df(c)$ at $c \in \mathfrak a^*$ may be identified with the element $\sum_{i=1}^{\dim \mathfrak a} (\partial f/\partial x_i)(c)x_i$ of $\mathfrak a$.

Let $C$ be an irreducible component of 
$\mathscr N(\a)$.  Let $f_1,\,f_2,\,\ldots,\,f_s$ be a set of generators of the ideal of definition of $C$.  Then the tangent space $T_{c,\,C}$ at $c$ in $C$  may be identified with the orthogonal in $\mathfrak a^*$ of the subspace generated by $df_i(c)\,:\,i=1,\,2,\,\ldots,\,s$.  Thus for the codimension of $C$ to be exactly $\ell(\mathfrak a)$, it suffices to know that for some $c \in C$, the dimension of the space generated by the $df_i(c)\,:\,i=1,\,2,\,\ldots,\,s$ is at least $\ell(\mathfrak a)$.  In this it is enough to take the $f_i\,:\,i=1,\,2,\,\ldots,\,\ell(\a)$ in ${Y}(\mathfrak a)_+$ and to show that the $df_i(c)\,:\,i=1,\,2,\,\ldots,\,\ell(\a)$ are linearly independent. This will be used in the first part of the proof of the Proposition below.



For each subset $Y$ of $\mathfrak a^*$ we let $\overline{Y}$ denote its Zariski closure.

\begin {prop} Let $\mathfrak a$ be an algebraic Lie algebra with ${Sy}(\mathfrak a)={Y}(\mathfrak a)$ polynomial.  The map $(h,\,\eta)\mapsto \overline {\bf{A}.\eta}$ is a bijection of the equivalence classes of adapted pairs for $\a$ onto the irreducible components of 
$\mathscr N(\a)$ of codimension $\ell(\mathfrak a)$ admitting a regular element.
\end {prop}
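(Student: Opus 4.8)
The plan is to construct the bijection by defining the forward map, checking it lands in the correct set, then producing an inverse. Along the way the principal tool will be the codimension bound already prepared in the paragraphs preceding the statement: for a regular $\a$, every irreducible component of $\mathscr N(\a)$ has dimension at least $d(\a)=\dim\a-\ell(\a)$, and to get the opposite inequality at a point $c$ it suffices to exhibit $\ell(\a)$ generators $f_1,\dots,f_{\ell(\a)}\in Y(\a)_+$ whose differentials $df_i(c)$ are linearly independent in $\a$.

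\medskip

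First I would verify the map is well defined and hits the claimed target. If $(h,\eta)$ is an adapted pair then by \ref{1.8} one has $\eta\in\mathscr N(\a)_{reg}$, so $\overline{\bf A.\eta}$ is an irreducible closed $\bf A$-stable subset of $\mathscr N(\a)$ containing the regular element $\eta$; it lies in a unique irreducible component $C$, and in fact equals that component once we check its codimension is exactly $\ell(\a)$. For this I would use regularity: the homogeneous generators $f_1,\dots,f_{\ell(\a)}$ of $Y(\a)$ have differentials at $\eta$ spanning a space of dimension $\ell(\a)$, because $\eta$ is regular and (via the non-degenerate pairing of $V$ with $\a^\eta$ from \ref{1.6}, together with \cite[Cor.\ 2.3]{JS}) these differentials span a complement to $\a.\eta$. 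Hence $\dim T_{\eta,C}\le d(\a)$, so $C$ has codimension exactly $\ell(\a)$ and $C=\overline{\bf A.\eta}$. Since equivalent adapted pairs have $\bf A$-conjugate second terms by \ref{defequivAP}, the map factors through equivalence classes and is therefore well defined; injectivity is then immediate, since $\overline{\bf A.\eta}=\overline{\bf A.\eta'}$ forces $\eta$ and $\eta'$ to lie in the same dense orbit and thus to be $\bf A$-conjugate.

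\medskip

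The core of the argument is surjectivity, and this is where I expect the main obstacle. Given an irreducible component $C$ of $\mathscr N(\a)$ of codimension $\ell(\a)$ possessing a regular element, I must produce an adapted pair $(h,\eta)$ with $\overline{\bf A.\eta}=C$. The natural candidate is to take $\eta$ to be a regular element of $C$ lying in a dense $\bf A$-orbit; since $C$ is $\bf k^*$-stable (each component is, as noted in \ref{1.8}) and $\bf A$-stable, the difficulty is to manufacture the semisimple element $h\in\a$ with $h.\eta=-\eta$. The $\bf k^*$-stability of $C$ should supply a one-parameter group of automorphisms scaling $\eta$, and the point is to realize the infinitesimal generator of this scaling as the adjoint action of an honest semisimple $h\in\a$ rather than merely an outer derivation. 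I would argue that because $\eta$ is regular and $C=\overline{\bf A.\eta}$ has the expected codimension, the dense orbit $\bf A.\eta$ is $\bf k^*$-stable, so $\bf k^*$ normalizes the stabilizer and acts on the orbit compatibly with $\bf A$; a cocycle or Levi-type splitting argument then lets one absorb the scaling into $\bf A$ itself, yielding $h$. Finally, a semisimple representative can be chosen since $\a$ is algebraic, exactly as in the definition of an adapted pair in \ref{1.3}. Establishing that the scaling is genuinely inner — equivalently that the $\bf k^*$-action on $\overline{\bf A.\eta}$ is implemented by an element of $\a$ — is the delicate step and the one I would treat most carefully.
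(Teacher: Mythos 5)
Your forward map, the codimension computation via the linear independence of the $df_i(\eta)$, and the injectivity argument all match the paper's proof. The genuine gap is in surjectivity, precisely at the step you yourself flag as ``delicate'': producing $h\in\a$ with $(\ad h)(\eta)=-\eta$ from a regular element $\eta$ of a component $C$ of codimension $\ell(\a)$. Your proposed route --- lifting the $\mathbf{k}^*$-scaling on $\overline{\mathbf{A}.\eta}$ to something inner by a ``cocycle or Levi-type splitting argument'' --- is not carried out, and it is also pitched at the wrong level of difficulty: knowing only that $\lambda\eta\in\mathbf{A}.\eta$ for each $\lambda$ gives a set-theoretic family $a_\lambda$ with no homomorphism or regularity properties, so absorbing $\mathbf{k}^*$ into $\mathbf{A}$ as an actual subgroup is neither obvious nor necessary.

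The missing idea is to work infinitesimally. Since each component of $\mathscr N(\a)$ is $\mathbf{k}^*$-stable and $\lambda\eta$ is again regular, the orbit $\mathbf{A}.(\lambda\eta)$ is open dense in $C$ for every $\lambda\in\mathbf{k}^*$ and hence coincides with $\mathbf{A}.\eta$; thus the curve $\lambda\mapsto\lambda\eta$ lies entirely inside the single orbit $\mathbf{A}.\eta$. Differentiating this curve at $\lambda=1$ shows that $\eta$ belongs to the tangent space of the orbit at $\eta$, which is exactly $(\ad \a)(\eta)$. Hence $\eta=(\ad h')(\eta)$ for some $h'\in\a$, and $h=-h'$ gives the required adapted pair (a semisimple representative may then be chosen as in \ref{1.3}, since $\a$ is algebraic). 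This tangent-space computation is the paper's entire converse argument; no splitting of the scaling action into $\mathbf{A}$ is required.
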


\begin {proof}
Let $(h,\,\eta)$ be an adapted pair.  By \ref{1.8}, $\eta\in\mathscr N(\a)$. Let $V$ be an $\ad h$ stable complement to 
$(\ad\mathfrak a)(\eta)$ in $\mathfrak a^*$.

 By hypothesis $Y(\mathfrak a)$ is polynomial, on (homogeneous) generators $f_1,\,f_2,\,\ldots,\, f_{\ell(\mathfrak a)}\in{Y}(\mathfrak a)_+$. Then as noted in \cite [Cor. 2.3]{JS} the linear subvariety $\eta +V$ is a Weierstrass section (in the sense of \ref{1.2}) for $\a$.  In the proof one shows (cf \cite [Thm. 6.3]{JS}) that the $df_i(\eta)$ are linearly independent.  Consequently, by what was noticed before the Proposition, the irreducible component of 
$\mathscr N(\a)$ containing $\eta$ has codimension exactly $\ell(\mathfrak a)$.

(This conclusion was obtained in \cite [Prop. 8.5(i)]{J0.7} by a different and more complicated argument).

We conclude  that $\overline{\textbf{A}.\eta}$ is an irreducible component of 
$\mathscr N(\a)$ of codimension $\ell(\mathfrak a)$ containing a regular element, namely $\eta$.

Conversely suppose that $C$ is an irreducible component of 
$\mathscr N(\a)$ of codimension $\ell(\mathfrak a)$ containing some regular element $\eta$. Then $\textbf{A}.\eta$ is open dense in $C$.

Consider the action of $\bf{ k}^*$ on $C$. Take $\lambda \in \bf {k}^*$. Clearly $\lambda \eta$ is also regular in $\mathfrak a^*$.  Then $\textbf{A}.(\lambda \eta)$ is open dense in $C$ and so must coincide with $\textbf{A}.\eta$.

 It follows that the tangent space 
$T_{\eta,\,C}$ contains $\bf {k}\eta$.  Yet this tangent space is just $(\ad \mathfrak a)(\eta)$.  We conclude that there exists $h \in \mathfrak a$ such that $(\ad h)(\eta)=-\eta$. In other words $(h,\,\eta)$ is an adapted pair for $\mathfrak a$.

\end {proof}

\begin{rems}

\

\begin{enumerate}

\item Notice that the bijection of the above Proposition may be obtained from \cite[Prop. 8.5]{J0.7} but only for the particular case of a regular truncated parabolic subalgebra. Here the Proposition is true for any regular  algebraic Lie algebra $\a$.

\item  The conclusion of the above Proposition is that classifying adapted pairs (up to the  equivalence defined in \ref{defequivAP}) is the same as classifying irreducible components of 
$\mathscr N(\a)$ of codimension $\ell(\mathfrak a)$ admitting a regular element
or is the same as classifying irreducible components of $\mathscr N(\a)$ admitting a dense orbit. This does not make adapted pairs easier to find but gives the search for them more meaning.

\item  In the above we have not excluded the possibility of there being irreducible components of 
$\mathscr N(\a)$ of codimension $< \ell(\mathfrak a)$.  More generally we do not know that 
$\mathscr N(\a)$  is equidimensional.

\item It can happen that $\mathscr N(\mathfrak a)$ admits no regular elements.  For example take  $\a$ to be the truncated Borel subalgebra in $\mathfrak {sp}_4(\bf k)$.  Then  $\mathscr N(\a)$ admits no regular elements.  Again take $\a$ to be a truncated biparabolic in $\s\l_n(\bf k)$. Then it can happen \cite [11.3.4]{J1} that 
$\mathscr N(\a)$
  has an irreducible component admitting no regular elements.

\item  Continue to assume that $\a$ is regular.  If 
$\mathscr N(\a)$ is irreducible and $\a$ admits an adapted pair $(h,\,\eta)$, then the above Proposition implies that $\mathscr N(\a)=\overline{{\bf A}.\eta}$.  Then, as in \cite[Proof of Corollary 8.6] {J0.7}, we obtain that $\mathscr N(\a)$ is a complete intersection by an argument following Kostant.  Otherwise a component of $\mathscr N(\a)$ containing the second element of an adapted pair need not be a complete intersection \cite [Sect. 11] {J1}.
\end{enumerate}
\end{rems}

 \subsection{The particular case of a truncated biparabolic subalgebra.}\label{7.1}

 Let $\g$ be a simple Lie algebra, $\h$ a Cartan subalgebra of $\g$, $\Delta$ the set of roots for the pair $(\g,\h)$ and $\pi$ a choice of simple roots.  Fix subsets $\pi^-,\pi^+$ of $\pi$ and let $\mathfrak q_{\pi^-,\pi^+}$ (or simply, $\mathfrak q$) denote the subspace of $\g$ spanned by $\h$ and the root subspaces of $\g$ with roots in $R^-:=(\mathbb N \pi^-\cup -\mathbb N\pi^+)\cap \Delta$. It is called a (standard) biparabolic subalgebra of $\g$.

 In the above framework our standing hypothesis is that
$$\pi^+\cup \pi^-=\pi,\, \pi^+\cap \pi^- \subsetneq \pi.$$

   This excludes $\q$ being a biparabolic subalgebra of a proper semisimple subalgebra of $\g$ and being reductive.  It implies that the invariant subalgebra $Y(\mathfrak q)$ is reduced to scalars \cite [Lemma 7.9]{J0} so is of lesser interest.  However the semi-invariant algebra $Sy(\mathfrak q)$ can be quite large and the latter is often polynomial (see \cite {FJ0} for the parabolic case and \cite {J0.1} in general).  Since $\mathfrak q$ is algebraic, it admits a canonical truncation $\mathfrak q_{\Lambda}$.  This is obtained by simply replacing its Cartan subalgebra $\mathfrak h$ by the common kernel $\h_\Lambda$ in $\h$ of the set $\Lambda$ of characters of $Sy(\q)$.


Consistent with our previous conventions we denote by $\bf Q_{\Lambda}$ the  adjoint group of $\q_{\Lambda}$.

 Unless otherwise specified we shall assume that $\q_{\Lambda}$ is a regular Lie algebra, which reduces to assuming that $Y(\mathfrak q_{\Lambda})$ is polynomial.  Let $d_i\,:\,i=1,\,2,\,\ldots,\,s=\ell(\q_{\Lambda})$ be the degrees of homogeneous generators of $Y(\mathfrak q_{\Lambda})$ and set $e_i=d_i-1$.
 Let $\kappa$ be a Chevalley antiautomorphism for $\mathfrak g$.   Through the Killing form on the latter we may identify $\kappa(\mathfrak q)$ with $\mathfrak q^*$ as an $\mathfrak h$ module.  Then $R:=-R^-$ is the set of (non-zero) roots of $\mathfrak q^*$ identified with $\kappa(\mathfrak q)$. For each $\alpha\in\Delta$, let $x_{\alpha}$ denote the unique (up to a non-zero scalar) non-zero vector of $\g$ of weight $\alpha$.



 Assume that $\mathfrak q_\Lambda$ admits $(h,\,\eta)$ as an adapted pair (with $h$ ad-semisimple).
  Through simultaneous conjugation by $\textbf{Q}_{\Lambda}$ we may assume that $h \in \mathfrak h_\Lambda$.

 Let $x_0$ denote an element of $\mathfrak h_\Lambda$.
 Then we can write $\eta$ in the form
 $\eta = \sum_{\alpha \in S\cup\{ 0\}}x_\alpha$
 for some subset $S$ of $R$.  The condition $(\ad h)(\eta)=-\eta$ means that  $h(\alpha)=-1$, for all $\alpha \in S$ and then that $x_0=0$. 
 Thus
 $$\eta = \sum_{\alpha \in S}x_\alpha \eqno{(*)}.$$

 Let $V$ be an $\ad h$ stable complement to $(\ad \mathfrak q_\Lambda)(\eta)$ in $\mathfrak q_\Lambda^*$.   Recall (cf \cite [2.1]{JS}) that duality restricts to a non-degenerate $\ad h$ equivariant pairing $V\times \mathfrak q_\Lambda^\eta$. Conversely any $\ad h$ stable subspace of $\mathfrak q_\Lambda^*$ non-degenerately paired to $\q_\Lambda^\eta$ is an  $\ad h$ stable complement to $(\ad \mathfrak q_\Lambda)(\eta)$.

 By \ref {1.6} $\{e_i\,:\,i=1,\,2,\,\ldots,\,s\}$ is the set of eigenvalues of $\ad h$ on $V$. Thus the eigenvalues of $\ad h$ on $\mathfrak q_\Lambda^\eta$ are $\leq 0$.

\subsubsection{}\label{2.2.1}

  \begin {lemma}

 \

 (i)  $\mathfrak q_\Lambda^\eta \subset \oplus_{\alpha \in - R}{\bf k}x_\alpha$.

 \

 (ii) One may choose $V$ such that $V \subset \oplus_{\alpha \in  R}{\bf k}x_\alpha$.

 \

 (iii)  $S_{\mid_{\h_\Lambda}}$ spans $\mathfrak h^*_\Lambda$.
 \end {lemma}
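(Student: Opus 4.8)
The plan is to deduce all three assertions from a single description of the stabiliser $\q_\Lambda^\eta$ adapted to the $\ad h$-grading. Since $\q_\Lambda$ is regular with an adapted pair, $\eta+V$ is a Weierstrass section and $Y(\q_\Lambda)$ is polynomial on homogeneous generators $f_1,\dots,f_s$, $s=\ell(\q_\Lambda)$, of degrees $d_i$. As recalled in the proof of Proposition~\ref{7.6} (cf. \cite[Thm.~6.3]{JS}) the differentials $df_i(\eta)$ are linearly independent; since the differential of a coadjoint invariant evaluated at a point lies in the stabiliser of that point, we have $df_i(\eta)\in\q_\Lambda^\eta$, and comparing $s$ with $\dim\q_\Lambda^\eta=\ell(\q_\Lambda)$ shows that $\{df_i(\eta)\}_{i=1}^s$ is a basis of $\q_\Lambda^\eta$. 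The key observation I would record is that each basis vector is an $\ad h$-eigenvector of controlled weight: as $f_i$ is ${\bf Q}_{\Lambda}$-invariant, $df_i$ is ${\bf Q}_{\Lambda}$-equivariant, and combining equivariance under $\exp(\tau\ad h)$ with the homogeneity of $df_i$ of degree $d_i-1$ and the relation $(\ad h)(\eta)=-\eta$ gives $(\ad h)(df_i(\eta))=-(d_i-1)df_i(\eta)=-e_i\,df_i(\eta)$.

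From this I would prove (i) as follows. The Cartan part $\h_\Lambda$ lies in the zero eigenspace of $\ad h$ on $\q_\Lambda$, so whenever $e_i>0$ the eigenvector $df_i(\eta)$ automatically lies in $\bigoplus_{\gamma\in R^-}\g_\gamma$ and has no $\h_\Lambda$-component. For the indices with $e_i=0$ the generator $f_i$ has degree one, hence is a ${\bf Q}_{\Lambda}$-invariant element of $\q_\Lambda$, i.e. a central element $v_i$, with $df_i(\eta)=v_i$. Writing $v_i=(v_i)_0+\sum_{\gamma\in R^-}c_\gamma x_\gamma$ with $(v_i)_0\in\h_\Lambda$ and using $[h',v_i]=0$ for $h'\in\h_\Lambda$ together with $[v_i,x_\delta]=0$ for $\delta\in R^-$, one reads off $\delta((v_i)_0)=0$ for every $\delta\in R^-$; since $\pi^+\cup\pi^-=\pi$ the roots of $R^-$ span $\h^*$, forcing $(v_i)_0=0$. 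Thus every $df_i(\eta)$ lies in $\bigoplus_{\gamma\in R^-}\g_\gamma=\bigoplus_{\alpha\in-R}{\bf k}x_\alpha$, giving (i). Part (iii) is then immediate: if $h'\in\h_\Lambda$ satisfies $\alpha(h')=0$ for all $\alpha\in S$ then $(\ad h')(\eta)=\sum_{\alpha\in S}\alpha(h')x_\alpha=0$, so $h'\in\q_\Lambda^\eta\cap\h_\Lambda=0$ by (i); hence $\{\alpha_{\mid\h_\Lambda}:\alpha\in S\}$ has trivial common kernel in $\h_\Lambda$ and therefore spans $\h^*_\Lambda$.

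For (ii) I would invoke the duality criterion recalled in \ref{7.1}: any $\ad h$-stable subspace of $\q_\Lambda^*$ non-degenerately paired with $\q_\Lambda^\eta$ is an admissible complement $V$. The Killing form pairs $\bigoplus_{\alpha\in R}\g_\alpha$ perfectly with $\bigoplus_{\gamma\in R^-}\g_\gamma$ and annihilates $\h_\Lambda$; since by (i) one has $\q_\Lambda^\eta\subseteq\bigoplus_{\gamma\in R^-}\g_\gamma$, the induced map $\bigoplus_{\alpha\in R}\g_\alpha\to(\q_\Lambda^\eta)^*$ is surjective. Choosing an $\ad h$-stable complement $V$ in $\bigoplus_{\alpha\in R}\g_\alpha$ to the kernel of this map yields a subspace non-degenerately paired with $\q_\Lambda^\eta$ and contained in $\bigoplus_{\alpha\in R}{\bf k}x_\alpha$, which is (ii). The one delicate point in the whole argument is the zero-weight contribution in (i): one must exclude an $\h_\Lambda$-component of $\q_\Lambda^\eta$, and the hard part is exactly the case $e_i=0$, where the weight bookkeeping alone does not suffice and the centrality of a degree-one generator must be exploited as above; once that is settled, (iii) and (ii) are formal consequences of the $\ad h$-grading and the Killing pairing.
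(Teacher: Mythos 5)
Your proof is correct and follows essentially the same route as the paper: split $\mathfrak q_\Lambda^\eta$ into $\ad h$-eigenspaces, note that nonzero-eigenvalue vectors automatically have no $\mathfrak h_\Lambda$-component, identify the zero eigenspace with the centre (degree-one invariants) and kill its $\mathfrak h_\Lambda$-part using $\pi^+\cup\pi^-=\pi$, then deduce $(ii)$ from the non-degenerate $\ad h$-equivariant pairing and $(iii)$ by contradiction with $(i)$. The only cosmetic difference is that you realize the eigenspace decomposition via the basis $\{df_i(\eta)\}$, where the paper reads it off directly from the pairing of $\mathfrak q_\Lambda^\eta$ with $V$.
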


 \begin {proof}
   Consider first the zero $\ad h$ eigenspace of $\mathfrak q_\Lambda^\eta$.  It corresponds to the subspace spanned by the degree one invariants, that is to say the centre $\mathfrak z_{\Lambda}$ of $\mathfrak q_\Lambda$.  The latter is stable under $\ad \mathfrak h$ and so has a basis formed from root vectors.  Yet $\mathfrak h_\Lambda \cap \mathfrak z_{\Lambda}=\{0\}$ since $\pi^+\cup\pi^- =\pi$ so $\mathfrak z_{\Lambda} \subset \oplus_{\alpha \in - R}\textbf{k}x_\alpha$.

   Consider an $\ad h$ eigenspace of $\mathfrak q_\Lambda^\eta$ having a strictly negative eigenvalue.  This must be a sum of root vectors with roots in $-R$.  Combined with the previous observation this proves $(i)$.

 $(ii)$ follows from $(i)$ and the remarks following $(*)$ above.

 If $(iii)$ fails, there exists $h' \in \mathfrak h_\Lambda$, $h'\neq 0$, such that $(\ad h')(\eta)= 0$, thus $h' \in \mathfrak q_\Lambda^\eta$. This contradicts $(i)$.

 \end {proof}

 \subsubsection{}\label{2.2.2}
 \begin{rems}
 \

 \begin{enumerate}
\item  Take $\eta$ as in $(*)$ above.  By $(i)$ of the previous Lemma, $h \in \mathfrak h_\Lambda$ is uniquely determined by the condition that $(\ad h)(\eta)=-\eta$.

\item In \cite [Sect. 6]{J1}  adapted pairs $(h,\,\eta)$ for a canonical truncation $\q_{\Lambda}$ of a biparabolic subalgebra in $\g$ simple of type $A$ were constructed and in this construction the element $\eta$ is given by $(*)$ of \ref{7.1} with  the supplementary property that $S_{\mid_{\h_\Lambda}}$ is a \textit{basis} of $\mathfrak h^*_\Lambda$.  This is stronger than $(iii)$ of Lemma \ref{2.2.1}.

When $\eta$ can be presented as above it will be called a minimal presentation and $(h,\,\eta)$ will be called a minimal representative of the equivalence class of adapted pairs for $\q_{\Lambda}$ defined by the adapted pair $(h,\,\eta)$
and the simultaneous conjugation by ${\bf Q}_{\Lambda}$.


\item A slightly better result than Lemma \ref {2.2.1}$(ii)$ was obtained in the construction of adapted pairs for truncated biparabolics in type $A$.  Namely it was shown $V$ (which is \textit{not} unique) can be chosen to have a basis amongst the root vectors of $\mathfrak q^*$, that is to say there exists a subset $T$ of $R$ such that

$$V=\oplus_{\alpha \in T}{\bf k} x_\alpha, \eqno{(**)}$$
is an $\ad h$ stable complement to $(\ad \mathfrak q_{\Lambda})(\eta)$ in $\mathfrak q_{\Lambda}^*$.

\end{enumerate}
\end{rems}

\subsubsection{}\label{7.2}

We shall show that $(**)$ of \ref {2.2.2} can always be assumed to hold in the present more general situation through the following elementary result from linear algebra.

Let $V$ be a vector space of dimension $m < \infty$.  Let $\{v_i\}_{i=1}^m$ be a basis of $V$. Choose a finite index subset $\sqcup_{i=1}^m T_i$ (of the positive integers) and for all $i=1,\,2,\,\ldots,\, m$ vectors $v_{i,\,r} \in V\,: \,r \in T_i$ such that
$$v_i = \sum_{r \in T_i} v_{i,\,r}.$$

\begin {lemma}  For all $i=1,\,2,\,\ldots,\, m$ there exists $r_i \in T_i$ such that $\{v_{i,\,r_i}\}_{i=1}^m$ is a basis for $V$.

\end {lemma}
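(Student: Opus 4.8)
The plan is to reinterpret the conclusion as the non-vanishing of a suitable determinant and then exploit its multilinearity. Since $\dim V = m$, there is a nonzero alternating multilinear form $\omega \colon V^m \to \textbf{k}$ (a volume form), unique up to a nonzero scalar. The key elementary fact I would use is that $m$ vectors $w_1,\ldots,w_m \in V$ form a basis of $V$ if and only if $\omega(w_1,\ldots,w_m)\neq 0$. Thus the assertion that $\{v_{i,\,r_i}\}_{i=1}^m$ is a basis is equivalent to $\omega(v_{1,\,r_1},\ldots,v_{m,\,r_m})\neq 0$, and the whole problem reduces to producing \emph{one} tuple $(r_1,\ldots,r_m)\in T_1\times\cdots\times T_m$ on which $\omega$ does not vanish.

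First I would record that $\omega(v_1,\ldots,v_m)\neq 0$, since $\{v_i\}_{i=1}^m$ is by hypothesis a basis of $V$. Next I would substitute the given decompositions $v_i=\sum_{r\in T_i}v_{i,\,r}$ into this expression and expand using the multilinearity of $\omega$ in each of its $m$ arguments. This yields
$$\omega(v_1,\ldots,v_m)=\sum_{(r_1,\ldots,r_m)\in T_1\times\cdots\times T_m}\omega(v_{1,\,r_1},\ldots,v_{m,\,r_m}),$$
a finite sum since each $T_i$ is finite. As the left-hand side is nonzero, at least one summand on the right must be nonzero: there exists a tuple $(r_1,\ldots,r_m)$ with $\omega(v_{1,\,r_1},\ldots,v_{m,\,r_m})\neq 0$. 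By the criterion recalled above, the corresponding family $\{v_{i,\,r_i}\}_{i=1}^m$ is a basis of $V$, which is exactly the claim. (Note each $T_i$ is nonempty, for otherwise $v_i=0$, contradicting that the $v_i$ form a basis, so the choice is genuine.)

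There is essentially no deep obstacle here; the content is entirely in choosing the right viewpoint. The only point requiring care is the setup of the volume form $\omega$ and the clean translation between "$\omega$ does not vanish on an $m$-tuple" and "that $m$-tuple is a basis," together with the bookkeeping that the multilinear expansion ranges precisely over $\prod_{i=1}^m T_i$. Once the statement is phrased through $\omega$, the argument is immediate, and it is this multilinear-expansion step that I expect to carry the full weight of the proof.
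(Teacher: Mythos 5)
Your argument is correct, and it takes a genuinely different route from the paper. The paper proceeds by induction on $m$: assuming the result for $m-1$, it fixes a choice $\{v_{i,\,r_i}\}_{i=2}^m$ spanning an $(m-1)$-dimensional subspace, observes that if the lemma failed for $m$ then every $v_{1,\,r}$ (hence $v_1$ itself) would lie in that span, and derives a contradiction by applying the induction hypothesis to the images of the $v_{i,\,r}$, $i\geq 2$, in the quotient $V/{\bf k}v_1$. Your proof replaces the induction entirely by a single application of multilinearity: expanding a volume form $\omega(v_1,\ldots,v_m)\neq 0$ over the decompositions $v_i=\sum_{r\in T_i}v_{i,\,r}$ exhibits a nonzero summand $\omega(v_{1,\,r_1},\ldots,v_{m,\,r_m})$, which is exactly the required basis. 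The two arguments are of comparable elementary character, but yours is shorter and makes the combinatorial content transparent (the expansion ranges precisely over $\prod_{i=1}^m T_i$), whereas the paper's induction stays entirely within the language of spans and quotients and needs the slightly fiddly case analysis via the sets $X$ and $Y$. Both are complete; your observation that each $T_i$ is nonempty (else $v_i=0$) is the only hypothesis check needed, and you make it.
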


\begin {proof} By induction on $m$.  It is trivial if $m=1$.  Let $X$ (resp. $Y$) denote the set of all $\{v_{i,r_i}: r_i \in T_i\}_{i=2}^m$ such that $\sum_{i=2}^m {\bf k} v_{i,\,r_i}$ has dimension $m-1$ (resp $<(m-1)$).  By the induction hypothesis $X$ is non-empty.

Fix $\{v_{i,\,r_i}\} \in X$.  If the conclusion of the lemma were false (for $m$ itself) we would have $v_{1,\,r} \in \sum_{i=2}^m {\bf k} v_{i,\,r_i}$, for all $r \in T_1$. Consequently $v_1 \in \sum_{i=2}^m {\bf k} v_{i,\,r_i}$.

The above assertion means that the images of the $v_{i,\,r_i}\,:i\,=2,\,\ldots,\,m$, generate a subspace of dimension $\leq (m-2)$ in $V/{\bf k}v_1$.  This also holds (trivially) if $\{v_{i,\,r_i}\} \in Y$.  It contradicts the induction hypothesis applied to the $m-1$ dimensional vector space $V/{\bf k}v_1$ and proves the lemma.
\end {proof}

\subsubsection{}\label{7.3}

From the above we can obtain our required refinement with the hypotheses and notations of \ref{7.1}.  Let $\{v_i\}$ be a choice of $\ad h$ eigenvectors of $V$.  We can assume that $v_i$ has eigenvalue $e_i$.  By Lemma \ref {2.2.1}$(ii)$ we may write
$$ v_i = \sum_{\alpha \in T_i}c_{i,\,\alpha} x_\alpha,$$
for some $T_i \subset R$ with $h(\alpha)=e_i, \forall \alpha \in T_i$ (and $c_{i,\,\alpha}\in\bf k$ for all $\alpha\in T_i$).

Now apply Lemma \ref {7.2} to $V$ identified with the quotient space $\mathfrak q_{\Lambda}^*/(\ad \mathfrak q_{\Lambda})(\eta)$.  Its conclusion means that we can find a subset $T$ of $R$ such that the image of $V^\prime:=\sum_{\alpha \in T}{\bf k} x_\alpha$ is $\mathfrak q_{\Lambda}^*/(\ad \mathfrak q_{\Lambda})(\eta)$.  Since $V^\prime$ is $h$ stable, we may use it to replace $V$ as a complement to $(\ad \mathfrak q_{\Lambda})(\eta)$ in $\mathfrak q_{\Lambda}^*$.

One may remark that by our construction the eigenvalues of $\ad h$ on $V^\prime$ are still the exponents of $\mathfrak q_{\Lambda}$ (in the sense of \ref{1.6}).  However this is automatic from \cite [Cor. 2.3(i)]{JS}.

\subsubsection{}\label{7.4}

We now give a result which generalizes what was found in the index one case \cite {J0.5} and which is of interest in its own right. Retain the hypotheses and notations of \ref{7.1}.
Assume that $S\subset R$ is defined by $\eta$ as in \ref {7.1}$(*)$ and that
 $V=\sum_{\alpha \in T}{\bf k} x_\alpha :T\subset R$, an $\ad h$-stable complement to $(\ad \mathfrak q_{\Lambda})(\eta)$ in $\mathfrak q_{\Lambda}^*$.
\begin {prop} $S\cup T$ spans $\mathfrak h^*$.
\end {prop}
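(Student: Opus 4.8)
The plan is to prove the equivalent dual statement: since $\h$ is finite dimensional, $S\cup T$ spans $\h^*$ if and only if the only $h'\in\h$ with $\gamma(h')=0$ for all $\gamma\in S\cup T$ is $h'=0$. So I fix such an $h'$ and aim to show $h'=0$. Because $\q_{\Lambda}$ is $\ad\h$-stable, $\h$ acts on $\q_{\Lambda}^*$ with $x_\alpha$ of weight $\alpha$; thus $\gamma(h')=0$ for all $\gamma\in S$ says exactly that $(\ad h')(\eta)=\sum_{\gamma\in S}\gamma(h')x_\gamma=0$, i.e. $h'$ fixes $\eta$, while $\gamma(h')=0$ for all $\gamma\in T$ says that $h'$ acts as zero on $V=\oplus_{\alpha\in T}\mathbf{k}x_\alpha$.

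First I would propagate the vanishing of $h'$ from $V$ to $\q_{\Lambda}^\eta$ using the non-degenerate $\ad h$-equivariant pairing $V\times\q_{\Lambda}^\eta$ recalled in \ref{7.1}. For the natural pairing of $\q_{\Lambda}^*$ with $\q_{\Lambda}$ one has $\langle (\ad h')\xi,\,z\rangle=-\langle\xi,\,(\ad h')z\rangle$. Taking $\xi\in V$ (so $(\ad h')\xi=0$) and $z\in\q_{\Lambda}^\eta$ (so $(\ad h')z\in\q_{\Lambda}^\eta$, since $h'$ fixes $\eta$ and normalises $\q_{\Lambda}$) gives $\langle\xi,\,(\ad h')z\rangle=0$ for all $\xi\in V$. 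Non-degeneracy of the pairing on $V\times\q_{\Lambda}^\eta$ then forces $(\ad h')z=0$, so $h'$ acts as zero on all of $\q_{\Lambda}^\eta$. This is the only place where the hypothesis on $T$, rather than just on $S$, enters.

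The heart of the argument is to feed this into the generators. Write $f_1,\dots,f_s$ for the homogeneous generators of $Y(\q_{\Lambda})=Sy(\q)$; each $f_i$ is an $\h$-semi-invariant of some weight $\lambda_i\in\Lambda$, and $\bigcap_{i}\ker\lambda_i=\h_{\Lambda}$ because the $\lambda_i$ generate the monoid of weights $\Lambda$ whose common kernel in $\h$ defines $\h_{\Lambda}$. The gradients $df_i(\eta)$ lie in $\q_{\Lambda}^\eta$ and are linearly independent — this is precisely what is used in the proof of Proposition \ref{7.6}, cf. \cite[Thm. 6.3]{JS} — so in particular each is non-zero. Since the gradient map $\xi\mapsto df_i(\xi)$ intertwines the $\h$-actions up to the weight shift $\lambda_i$, and $\eta$ is fixed by $h'$, one obtains $(\ad h')\,df_i(\eta)=\pm\lambda_i(h')\,df_i(\eta)$. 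But $df_i(\eta)\in\q_{\Lambda}^\eta$, on which $h'$ acts as zero, so $\lambda_i(h')\,df_i(\eta)=0$; as $df_i(\eta)\neq0$ this yields $\lambda_i(h')=0$ for every $i$, whence $h'\in\bigcap_i\ker\lambda_i=\h_{\Lambda}$.

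Finally, $h'\in\h_{\Lambda}$ together with $(\ad h')(\eta)=0$ contradicts Lemma \ref{2.2.1}(iii): that lemma asserts that $S_{\mid_{\h_{\Lambda}}}$ spans $\h^*_{\Lambda}$, i.e. no non-zero element of $\h_{\Lambda}$ is annihilated by all of $S$. Hence $h'=0$, which is exactly the spanning of $\h^*$ by $S\cup T$. I expect the delicate point to be the gradient-equivariance step of the third paragraph: one must check carefully that the $\h$-weight $\lambda_i$ of the semi-invariant $f_i$ reappears as the eigenvalue of $\ad h'$ on $df_i(\eta)$ at the $h'$-fixed point $\eta$ (the sign being irrelevant), and that these weights collectively cut out exactly $\h_{\Lambda}$ so that the argument closes against Lemma \ref{2.2.1}(iii).
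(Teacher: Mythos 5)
Your proof is correct, but it reaches the conclusion by a genuinely different (dual) route from the paper's. The paper works directly with the invariants: using the isomorphism $Y(\q_\Lambda)\liso R[\eta+V]\simeq S(V^*)$ of \cite[Cor. 2.3]{JS} it picks, for each $\alpha\in T$, the invariant $p_\alpha$ restricting to $x_{-\alpha}$, extends the restriction to ${\bf k}\eta+V$, uses $\ad h$-weight zero to obtain $p_{\alpha}{}_{\mid_{{\bf k}\eta+V}}=y^{h(\alpha)}x_{-\alpha}+\ldots$, and compares $\h$-weights to conclude ${\bf k}T+{\bf k}S={\bf k}\Lambda+{\bf k}S=\h^*$, the last equality by Lemma \ref{2.2.1}$(iii)$. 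You instead prove the dual assertion that $(S\cup T)^{\perp}=0$ in $\h$; your two new ingredients are the transfer of the vanishing of $\ad h'$ from $V$ to $\q_\Lambda^{\eta}$ via the non-degenerate pairing $V\times\q_\Lambda^{\eta}$, and the eigenvalue identity $(\ad h')\,df_i(\eta)=\lambda_i(h')\,df_i(\eta)$ at the $h'$-fixed point $\eta$, which together give $h'\in\bigcap_i\ker\lambda_i=\h_\Lambda$, after which Lemma \ref{2.2.1}$(iii)$ finishes exactly as in the paper. Both arguments rest on the same two external inputs, namely \cite[Cor. 2.3, Thm. 6.3]{JS} (you use it through the non-vanishing of the $df_i(\eta)$, the paper through the existence and form of the $p_\alpha$) and Lemma \ref{2.2.1}$(iii)$, so neither is more economical in hypotheses. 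Your version is somewhat cleaner: it avoids the explicit formula $(*)$ and the minor awkwardness that $y=\sum_{\beta\in S}x_{-\beta}$ is not itself an $\h$-weight vector, and it treats the case $\eta=0$ uniformly rather than separately. What it does not produce is the explicit presentation $(*)$ of the restricted invariants, which the paper's route yields as a by-product and which is the natural starting point for the integrality question of \ref{7.10} (whether $\Delta\subset\mathbb Z S+\mathbb Z T$). The one step you should write out in full is the weight-equivariance of the differential: it follows from $d(a.f)(a.\xi)=a.(df(\xi))$ applied to $a=\exp(t\,\ad h')$ at the fixed point $\xi=\eta$, or equivalently from the observation that $(\partial f_i/\partial x_j)(\eta)=0$ whenever the $\h$-weight of $\partial f_i/\partial x_j$ does not vanish on $h'$; with that in place the argument is complete.
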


\begin {proof} Suppose $\eta = 0$, that is $S=\emptyset$.  Since $\eta$ is regular,
 $\mathfrak q_{\Lambda}=\q^\eta_\Lambda$ is commutative and so spanned by $|\pi|$ linearly independent root vectors with roots in $\pi \cup -\pi$.  In this case $T =R$ and this spans $\mathfrak h^*$, hence the assertion.

 From now on assume $\eta \neq 0$.

Recall that $\g$ may be identified with its dual $\g^*$ through the Killing form $K$ on $\g$.
View ${\bf k}\eta +V$ as a subspace of $\g$ and set $V^*=\sum_{\alpha \in T}{\bf k}x_{-\alpha}$ and $y=\sum_{\beta \in S}x_{-\beta}$.
One may remark that $(\ad h)(y)=y$.
Now the sum ${\bf k}\eta +V$ is direct since $\ad h$ has eigenvalue $-1$ on $\eta$ and non-negative eigenvalues on $V$.  
By a similar reason, the sum ${\bf k}y+V^*$ is direct. Moreover  ${\bf k}\eta\oplus V$ and ${\bf k}y\oplus V^*$ are non-degenerately paired through $K$. Identifying ${\bf k}y\oplus V^*$ with a subspace of $\g^*$ through $K$, we may then complete ${\bf k}y\oplus V^*$ to $\g^*$ by the space of functions in $\g^*$ orthogonal to ${\bf k}\eta\oplus V$.


Hence the restriction to ${\bf k}\eta\oplus V$ of a polynomial in ${S}(\g^*)$  is a polynomial on $y$ and the $x_{-\alpha}$, $\alpha\in T$.



Recall that the restriction map induces an algebra isomorphism of ${Y}(\mathfrak q_{\Lambda})$ onto the space of regular functions on $\eta +V$ (\cite [Cor. 2.3]{JS}), which we may identify with ${S}(V^*)$.  For all $\alpha \in T$, let $p_\alpha$ denote the invariant polynomial whose image is the coordinate function defined by $x_\alpha$.  This translates through the above to the formula ${p_\alpha}_{\mid_{\eta+V}} = x_{-\alpha}$. On the other hand since restriction is $h$ equivariant,  ${p_\alpha}_{\mid_{{\bf k}\eta+V}}$ is an $h$-weight vector of zero weight.  Thus
$${p_\alpha}_{\mid_{{\bf k}\eta+V}}=y^{h(\alpha)}x_{-\alpha}+\sum_{I\in\mathscr F} q_Ix^I \eqno{(*)}$$

with $\mathscr F$ a finite set, and for all $I\in\mathscr F$, $q_I\in{\bf k}[y]$ such that $q_I(\eta)=0$ and $x^I$ a monomial in a basis of $V^*$.

Let ${\bf k}\Lambda$ be the $\bf k$ linear span of the $\mathfrak h$-weights of ${Y}(\mathfrak q_{\Lambda})$.  By
definition $\mathfrak h_{\Lambda}= \{h \in \mathfrak h\mid h({\bf k}\Lambda)=0\}$ and conversely ${\bf k}\Lambda=\{\lambda \in \mathfrak h^*\mid\lambda (\mathfrak h_{\Lambda})=0\}$.

Hence in the right hand side of $(*)$ there is an $\h$-weight vector of weight equal to a sum of $-\alpha$ and of ${h(\alpha)}$ elements of $-S$, whereas the left hand side is the restriction to ${\bf k}\eta+V$ of an $\h$-weight vector  of weight belonging to ${\bf k}\Lambda$.  It follows that modulo ${\bf k}S$ the set $T$ spans ${\bf k}\Lambda$. In other words ${\bf k}T+{\bf k}S={\bf k}\Lambda+{\bf k}S$.  On the other hand by Lemma \ref {2.2.1}$(iii)$ one has ${\bf k}S_{\mid_{\mathfrak h_{\Lambda}}}=\mathfrak h^*_{\Lambda}$, whilst ${\bf k}\Lambda$ is the orthogonal of $\mathfrak h_{\Lambda}$ in $\mathfrak h^*$.  Hence ${\bf k}\Lambda+{\bf k}S=\mathfrak h^*$, which by our previous identity gives the assertion of the proposition.

\end {proof}

\subsection{Equivalent adapted pairs for truncated biparabolics.}\label{7.8}

\subsubsection{}\label{equivpara}
Continue with the notations and hypotheses of \ref{7.1}, in particular
 that ${Y}(\mathfrak q_{\Lambda})$ is polynomial and that $\mathfrak q_{\Lambda}$ admits an adapted pair $(h,\,\eta)$.

 Since $h$ is an ad-semisimple element, then up to the diagonal action of
$\bf Q_{\Lambda}$ on $(h,\,\eta)$ we can assume that $h \in \mathfrak h_{\Lambda}$. 

To simplify notation set $\a=\q_\Lambda^{\eta}=\{x\in\q_{\Lambda}\mid(ad\, x)(\eta)=0\},\, \mathfrak z =\mathfrak z_\Lambda$.
Let $\a_i$ be the $\ad h$ eigensubspace of $\a$ of eigenvalue $i$, which we recall is zero unless $i \in -\mathbb N$.
Recall also that $\a_0=\mathfrak z$.
Set $m\in\mathbb N^*$ such that $\a=\displaystyle\oplus_{-m\le i\le 0}\a_i$ and
$\mathfrak u:= \displaystyle\oplus_{-m\le i\le -1}\a_i$. Then the elements of $\mathfrak u$ act by nilpotent derivations on $\q_\Lambda$.

 Let $\textbf{U}$ be the unique irreducible unipotent algebraic subgroup of $\bf Q_{\Lambda}$ with Lie algebra $\ad_{\q_{\Lambda}}(\mathfrak u)$. Observe that, for all $u\in {\bf U}$, $h-u.h\in\mathfrak u$.

\begin {prop}  Let $(h^\prime,\,\eta^\prime)$ be an adapted pair for $\mathfrak q_{\Lambda}$ with
$\eta^\prime \in {\bf Q}_{\Lambda}.\eta$. Then there exists
$q'\in\bf Q_{\Lambda}$ such that
$q'.h^\prime = h,\, q'.\eta^\prime =\eta$.
\end {prop}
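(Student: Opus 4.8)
The plan is to reduce to the case $\eta'=\eta$ and then to remove the discrepancy between $h$ and the transported element in two stages: a unipotent conjugation that kills the strictly negative part, and a Jordan-theoretic argument that kills the central part.

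First I would choose $a\in\mathbf Q_{\Lambda}$ with $a.\eta'=\eta$ and replace $(h',\eta')$ by $(a.h',\eta)$. Setting $h''=a.h'$, this is again an adapted pair with second element $\eta$, so it suffices to produce $s\in\mathbf Q_{\Lambda}$ fixing $\eta$ with $s.h''=h$: then $q'=sa$ answers the question. Since both $h$ and $h''$ satisfy $(\ad\,\cdot)(\eta)=-\eta$, their difference lies in $\a=\q_\Lambda^{\eta}=\mathfrak z\oplus\mathfrak u$, so write $h-h''=w_0+w_{<0}$ with $w_0\in\a_0=\mathfrak z$ and $w_{<0}\in\mathfrak u$. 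Note that $\mathbf U$ fixes $\eta$ (as $\mathfrak u\subset\q_\Lambda^{\eta}$) and fixes $\mathfrak z$ pointwise (as $\mathfrak z$ is central), so conjugating $h''$ by $\mathbf U$ can never alter the $\mathfrak z$-component of its difference with $h$; this is the content of the observation $h-u.h\in\mathfrak u$ recorded before the Proposition.

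Next I would kill $w_{<0}$ by a finite unipotent conjugation built degree by degree from the bottom. Assuming inductively that $h-h''$ has been brought into $\mathfrak z\oplus\bigoplus_{j\ge k}\a_{-j}$ with $\a_{-k}$-component $\epsilon_k$, I apply $\exp(\ad\,\xi_k)$ with $\xi_k=\epsilon_k/k\in\a_{-k}\subset\mathfrak u$. Since $[\xi_k,h]=k\xi_k$ with $k\neq0$, the $\a_{-k}$-component is cancelled, whereas $[\xi_k,h-h'']$ and the higher brackets contribute only in degrees $<-k$ and nothing in $\mathfrak z$, so the $\mathfrak z$-component stays equal to $w_0$. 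Because the eigenvalues of $\ad\,h$ on $\mathfrak u$ are bounded below by $-m$, after at most $m$ steps I obtain $u\in\mathbf U$ with $u.h''=h-w_0$. The solvability at each stage rests only on $\ad\,h$ being invertible on each $\a_{-k}$ with $k\ge 1$, and the termination on the finiteness of the grading.

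The main obstacle, and the only genuinely nonformal point, is the removal of $w_0$, which no stabilizer element can touch: the stabilizer of $\eta$ acts on $\q_\Lambda$ through $\mathbf U$, and $\mathbf U$ fixes $\mathfrak z$. I would instead show $w_0=0$ using the absolute Jordan decomposition. Replacing $h$ and $h'$ at the outset by their semisimple parts is harmless, since the nilpotent part of an $\ad$-semisimple element is central and hence does not affect $(\ad\,\cdot)(\eta)$; thus $h$, $h'$, and so $h''$ and $u.h''=h-w_0$ may be taken to be honestly semisimple elements of $\q_\Lambda$. On the other hand $\mathfrak z=\mathfrak z_\Lambda$ consists of nilpotent elements: a semisimple central element would lie in the maximal torus $\mathfrak h_\Lambda$, whereas $\mathfrak h_\Lambda\cap\mathfrak z=\{0\}$ by Lemma \ref{2.2.1}. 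Hence $w_0$ is nilpotent and, being central, commutes with the semisimple element $h$, so $h-w_0=h+(-w_0)$ is the Jordan decomposition of the semisimple element $h-w_0$, forcing $w_0=0$. Therefore $u.h''=h$, and $q'=ua$ satisfies $q'.h'=h$ and $q'.\eta'=\eta$, as required.
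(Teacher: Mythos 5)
Your proof is correct and follows essentially the same route as the paper's: reduce to $\eta'=\eta$, remove the strictly negative graded part of the discrepancy by a Kostant-style unipotent conjugation through $\mathbf{U}\subset\Stab_{\mathbf{Q}_{\Lambda}}\eta$ (the paper cites \cite[3.6]{Ko0} via \cite[8.10]{J0.7} for the degree-by-degree step you spell out), and kill the remaining central component by uniqueness of the Jordan decomposition. The only differences are cosmetic: you make explicit the harmless reduction to genuinely semisimple $h$, $h'$ and the fact that $\mathfrak z_\Lambda$ consists of nilpotent elements (which the paper asserts without comment, and which really follows from $\mathfrak z_\Lambda\subset\oplus_{\alpha\in -R}{\bf k}x_\alpha$ in the proof of Lemma \ref{2.2.1} together with commutativity of $\mathfrak z_\Lambda$, rather than from semisimple central elements having to lie in $\mathfrak h_\Lambda$).
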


\begin {proof}   Since for all $q\in{\bf Q}_{\Lambda}$, $(q.h',\,q.\eta')$ is also an adapted pair (see \ref{defequivAP}), we can assume that $\eta^\prime =\eta$.  Then, adopting the above notations, $h-h^\prime \in \mathfrak a$.

As in \cite [8.10]{J0.7} (following an argument of Kostant, see \cite[3.6]{Ko0}) for every $1\le j\le m$, we can find $w_j\in\mathfrak u$ such that $h'-(exp(\ad\,w_j))(h)\in\a_0+\sum_{i=-m}^{-(j+1)}\a_i$.

Hence we can find $u\in \textbf{U}$ such that $x:=h^\prime-u.h \in \mathfrak a_0=\mathfrak z$.

On the other hand $u.h$ is the sum of $h^\prime$ and $-x$, which commute, $h^\prime$ being ad-semisimple and $-x$ being ad-nilpotent. Thus, since $u.h$ is also ad-semisimple, $x$ must be zero. Finally $u.\eta=\eta$, since $u\in \bf U \subset \Stab_{\bf{Q}_\Lambda}\eta$.

\end {proof}

\begin{rem}
\

 Thus for a truncated biparabolic subalgebra $\mathfrak q_\Lambda$ two adapted pairs $(h,\,\eta), (h',\,\eta')$ (with $h,\,h'$ ad-semisimple) are equivalent in the sense of \ref{defequivAP}, if and only if there exists $q \in \textbf{Q}_\Lambda$ such that $q.h=h',\, q.\eta =\eta'$.
Observe that this result extends to regular truncated biparabolic subalgebras the result in \cite[Prop. 8.10]{J0.7} proved for a regular truncated parabolic subalgebra.
\end{rem}

\subsubsection{}\label{7.9}

Retain the notations and hypotheses of \ref {7.1}.   In the following we give a criterion for deciding when two adapted pairs $(h,\,\eta)$ and $(h^\prime,\,\eta^\prime)$ for $\mathfrak q_{\Lambda}$ are equivalent.

 Up to conjugation by elements of $\textbf{Q}_\Lambda$ we can assume that $h,\,h^\prime \in \mathfrak h_{\Lambda}$. Then by Proposition \ref {equivpara} equivalence implies that there exists $q''\in{\bf Q}_{\Lambda}$ such that $q''.h^\prime = h$. Then as noted in \cite [Lemma 8.11]{J0.7} we can assume that $q''$ belongs to the Weyl group $W$ and hence to the Weyl subgroup $W_M$ corresponding to the Levi factor of $\mathfrak q_{\Lambda}$.  Summarizing

\begin {cor} Let $(h,\,\eta)$ and $(h^\prime,\,\eta^\prime)$ be adapted pairs for $\mathfrak q_{\Lambda}$. Then there exists $q,\,q^\prime \in{\bf Q}_{\Lambda}$ such that $q.h,\,q^\prime. h^\prime  \in \mathfrak h_{\Lambda}$ and if these pairs are equivalent
then there exists $w \in W_M$ such that $wq^\prime. h^\prime =q.h$.
\end {cor}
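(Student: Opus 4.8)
The plan is to reduce the assertion to two standard ingredients joined by Proposition \ref{equivpara}: the conjugacy of maximal tori under the adjoint group, which produces the elements $q,q'$, and the comparison of group conjugacy with Weyl-group conjugacy inside the reductive Levi factor, which produces $w$. The first ingredient gives the unconditional part of the statement, while the equivalence hypothesis enters only through Proposition \ref{equivpara} and the final Weyl-group reduction.

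First I would establish the existence of $q,q'\in\mathbf{Q}_\Lambda$ with $q.h,\,q'.h'\in\mathfrak{h}_\Lambda$, with no reference yet to equivalence. Since $h$ is ad-semisimple it lies in some maximal toral subalgebra of the algebraic Lie algebra $\mathfrak{q}_\Lambda$; all such subalgebras are $\mathbf{Q}_\Lambda$-conjugate and $\mathfrak{h}_\Lambda$ is one of them, so a $\mathbf{Q}_\Lambda$-conjugate of $h$ lies in $\mathfrak{h}_\Lambda$, and likewise for $h'$. This already yields the first clause of the Corollary.

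Next, assuming the two pairs are equivalent, I would pass to the conjugated pairs $(q.h,\,q.\eta)$ and $(q'.h',\,q'.\eta')$. These are again adapted pairs for $\mathfrak{q}_\Lambda$ with first components in $\mathfrak{h}_\Lambda$, and they remain equivalent since equivalence (in the sense of \ref{defequivAP}) concerns only the $\mathbf{Q}_\Lambda$-orbit of the second component. Hence $q'.\eta'\in\mathbf{Q}_\Lambda.(q.\eta)$, so Proposition \ref{equivpara} applies to this ordered pair and furnishes $q''\in\mathbf{Q}_\Lambda$ with $q''.(q'.h')=q.h$ (and $q''.(q'.\eta')=q.\eta$), where now both $q'.h'$ and $q.h$ lie in $\mathfrak{h}_\Lambda\subset\mathfrak{h}$.

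It remains to replace $q''$ by an element of $W_M$, and this is the heart of the matter, where the biparabolic structure is genuinely used; here I would invoke \cite[Lemma 8.11]{J0.7}. The hard part is that $\mathbf{Q}_\Lambda$ is \emph{not} reductive, so the clean principle ``two torus elements that are group-conjugate are Weyl-conjugate'' does not apply verbatim. The resolution is to use the Levi decomposition $\mathbf{Q}_\Lambda = M\ltimes N$ with $N$ unipotent: since $q''$ carries the semisimple element $q'.h'\in\mathfrak{h}_\Lambda$ to $q.h\in\mathfrak{h}_\Lambda$, the unipotent radical part can be absorbed into the centralizer of the relevant semisimple element, so that modulo such a factor $q''$ may be taken in $N_M(\mathfrak{h})$. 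Its class in $N_M(\mathfrak{h})/Z_M(\mathfrak{h})=W_M$ then gives the required $w\in W_M$ with $wq'.h'=q.h$; the membership in $W_M$ rather than the full Weyl group $W$ is exactly the point that $q''$ comes from $\mathbf{Q}_\Lambda$, whose reductive part is the Levi $M$.
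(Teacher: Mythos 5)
Your proposal is correct and follows essentially the same route as the paper: conjugate $h$ and $h'$ into $\mathfrak h_\Lambda$, apply Proposition \ref{equivpara} to the (still equivalent) conjugated pairs to obtain $q''$ with $q''.(q'.h')=q.h$, and then invoke \cite[Lemma 8.11]{J0.7} to replace $q''$ by an element of $W_M$. Your extra discussion of why the unipotent radical can be absorbed is just an expansion of what the paper delegates entirely to that cited lemma.
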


\subsubsection{}\label{7.9.1}

Continue to retain the notations and hypotheses of \ref{7.1}.
Let $(h,\,\eta)$ be an adapted pair for a truncated biparabolic $\mathfrak q_\Lambda$.  Recall that we may assume $h \in \mathfrak h_{\Lambda}$ and that $\eta$ may be presented in the form given in \ref {7.1}$(*)$, that is through a subset  $S$ of the roots of $\mathfrak q^*$.  Recalling the second remark of  \ref{2.2.2}, we say that such a presentation is minimal if $S_{\mid_{\mathfrak h_\Lambda}}$ is a \textit{basis} for $\mathfrak h^*_\Lambda$.  It is usually easier to construct minimal presentations and indeed all presentations of adapted pairs constructed in \cite {J1} were minimal.

A slight advance in showing that every equivalence class of an adapted pair has a minimal representative has been obtained in \cite {J7}.  Let $(\mathfrak q_\Lambda)_0$ denote the zero eigenspace of $\mathfrak q_\Lambda$ with respect to $\ad h$ and $(\textbf{Q}_\Lambda)_0$ the closed connected subgroup of $\textbf{Q}_\Lambda$ with Lie algebra $\ad_{\q_{\Lambda}}((\mathfrak q_\Lambda)_0)$.  Let $(\mathfrak q^*_\Lambda)_{-1}$ denote the $-1$ eigenspace of $\mathfrak q^*_\Lambda$ with respect to $\ad h$.  By definition $\eta \in (\mathfrak q^*_\Lambda)_{-1}$.  Obviously $(\textbf{Q}_\Lambda)_0$ acts on $(\mathfrak q^*_\Lambda)_{-1}$ and fixes $h$.

By \cite [Prop. 4.4(iv)]{J7}, the orbit $(\textbf{Q}_\Lambda)_0.\eta$ is open dense in $(\mathfrak q^*_\Lambda)_{-1}$. It follows that, if $(h,\,\eta')$ is an adapted pair for $\mathfrak q_\Lambda$, then $\eta'=q.\eta$ for some $q \in (\textbf{Q}_\Lambda)_0$.

Moreover to show that the equivalence class of $(h,\eta)$ has a minimal representative, it is enough to construct (with $h$ fixed) a regular element $\eta' \in (\mathfrak q^*_\Lambda)_{-1}$ of minimal presentation, since then $\eta$ is conjugate to $\eta'$ under $(\textbf{Q}_\Lambda)_0$.

Again suppose that $(h',\,\eta')$ is an adapted pair for $\q_{\Lambda}$ with $\eta'$ of minimal presentation and $h' \in \mathfrak h_\Lambda$.  If this pair is equivalent to the adapted pair $(h,\,\eta)$ (which need not be the case !) then by Corollary \ref{7.9}, there exists $w \in W_M$ such that $h=w.h'$.  Moreover since $W_M$ just permutes the roots of $\mathfrak q^*$, it is clear that $(w.h',\,w.\eta')=(h,\,w.\eta')$ is an adapted pair for $\q_{\Lambda}$ whose second term is of minimal presentation.  Then by the above $\eta$ and $w.\eta'$ are conjugate under $(\textbf{Q}_\Lambda)_0$.

If $\dim (\mathfrak q^*_\Lambda)_{-1}= \dim \mathfrak h_\Lambda$ then every second element of an adapted pair for $\q_{\Lambda}$ with $h$ as a first element is of minimal presentation (by Lemma \ref{2.2.1} $(iii)$).
 It turns out \cite [Sect. 4]{J7} that this condition is equivalent to the condition $(\mathfrak q_\Lambda)_0=\mathfrak h_\Lambda + \mathfrak z_\Lambda$ where we recall that $\mathfrak z_\Lambda$ is the centre of $\mathfrak q_\Lambda$.  However this need not hold and indeed $(\mathfrak q_\Lambda)_0$ can even fail to be a solvable Lie algebra \cite [Sect. 4]{J7}.
By the above we can give the following remark.

\begin{rem}
\

The criterion for equivalence for adapted pairs given in Corollary \ref{7.9} is both necessary and sufficient.
\end{rem}



\subsection{The integrality of the eigenvalues.}\label{7.10}

Let $\mathfrak a$ be an algebraic Lie algebra and continue to assume that $Sy(\mathfrak a)=Y(\mathfrak a)$ and is polynomial.  Given an adapted pair $(h,\,\eta)$ for $\mathfrak a$ one can ask\footnote{Question also posed by Elashvili.} if the eigenvalues of $\ad h$ on $\mathfrak a$ are all integer.  Indeed believing this to be true is why we chose the scaling $(\ad h)(\eta)=-\eta$ rather than the scaling $(\ad h)(\eta)=-2\eta$ which arises from the description of an $\s\l_2$ triple.  In particular our question has a positive answer if $\mathfrak a$ is semisimple.  This  is expressed by saying that the regular nilpotent orbit is even.

Now suppose that $\mathfrak a$ is a truncated biparabolic.  Then our question would have a positive answer if we had the following strengthening of the conclusion of Proposition \ref {7.4}, namely that $\Delta \subset \mathbb ZS + \mathbb ZT$.  However since $S,\,T \subset \Delta$, this is automatic in type $A$ (see for instance \cite[A.2]{J0.5}).  Thus we have the following

\begin {cor}  Let $\mathfrak q_\Lambda$ be a truncated biparabolic subalgebra of $\mathfrak {sl}_n(\bf k)$ and  $(h,\,\eta)$ an adapted pair for $\mathfrak q_\Lambda$ with $h\in\h_{\Lambda}$.  Then the eigenvalues of $\ad h$ on $\mathfrak q_\Lambda$ are integer.
\end {cor}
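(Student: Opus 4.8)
The plan is to reduce the assertion to a statement about the values of $h$ on the root system $\Delta$ and then to invoke the special arithmetic of the roots in type $A$. Since $h\in\mathfrak h_\Lambda\subset\mathfrak h$ and $\mathfrak q_\Lambda$ decomposes as $\mathfrak h_\Lambda$ together with the root spaces $\mathbf k x_\alpha$ for $\alpha$ ranging over the roots $R^-$ of $\mathfrak q$, the eigenvalues of $\ad h$ on $\mathfrak q_\Lambda$ are precisely $0$ (on $\mathfrak h_\Lambda$) and the scalars $h(\alpha)=\alpha(h)$ for $\alpha\in R^-$. As $R^-\subset\Delta$ by definition, it therefore suffices to prove that $h(\alpha)\in\mathbb Z$ for every $\alpha\in\Delta$.

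First I would record that $h$ is already integer-valued on the two distinguished subsets $S$ and $T$ of $R\subset\Delta$. Indeed, by the presentation \ref{7.1}$(*)$ of $\eta$, the adapted pair condition $(\ad h)(\eta)=-\eta$ forces $h(\alpha)=-1$ for all $\alpha\in S$, while by the construction of $V$ in \ref{7.3} each $\alpha\in T$ satisfies $h(\alpha)=e_i$ for one of the exponents $e_i=d_i-1\in\mathbb N$. Thus $h(S\cup T)\subset\mathbb Z$. By Proposition \ref{7.4} the set $S\cup T$ spans $\mathfrak h^*$ over $\mathbf k$, so every root is a $\mathbf k$-linear combination of elements of $S\cup T$; the remaining point is to upgrade this to an \emph{integral} combination.

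The key step, and the only place where type $A$ enters, is the assertion $\Delta\subset\mathbb Z S+\mathbb Z T$. This is exactly the strengthening of Proposition \ref{7.4} alluded to above, and in type $A$ it holds automatically because any set of roots spanning $\mathfrak h^*$ over $\mathbf k$ already generates the full root lattice $\mathbb Z\Delta$ over $\mathbb Z$. Concretely, writing the roots of $\mathfrak{sl}_n(\mathbf k)$ as the differences $\epsilon_i-\epsilon_j$ and viewing a set of roots as the edge set of a graph on $\{1,\dots,n\}$, a $\mathbf k$-spanning set corresponds to a connected spanning subgraph; since $\epsilon_i-\epsilon_j=(\epsilon_i-\epsilon_k)+(\epsilon_k-\epsilon_j)$, the $\mathbb Z$-span of the edges of a connected graph contains every difference $\epsilon_i-\epsilon_j$, hence all of $\Delta$. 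This is the content of \cite[A.2]{J0.5}. Granting it, each $\alpha\in\Delta$ may be written as $\alpha=\sum_{\beta\in S}n_\beta\beta+\sum_{\gamma\in T}m_\gamma\gamma$ with all $n_\beta,m_\gamma\in\mathbb Z$, whence $h(\alpha)=-\sum_{\beta\in S}n_\beta+\sum_{\gamma\in T}m_\gamma\,h(\gamma)\in\mathbb Z$, since each $h(\gamma)\in\mathbb N$.

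The main obstacle is precisely this integrality upgrade $\Delta\subset\mathbb Z S+\mathbb Z T$: Proposition \ref{7.4} yields only a spanning statement over $\mathbf k$, and passing from a $\mathbf k$-spanning family of roots to a $\mathbb Z$-generating family of the root lattice fails for a general simple $\mathfrak g$. It is the graph-theoretic rigidity of the type $A$ root system — equivalently the fact that there every $\mathbf k$-spanning set of roots is automatically a $\mathbb Z$-generating set of $\mathbb Z\Delta$ — that makes the argument go through, and this is exactly why the Corollary is restricted to $\mathfrak{sl}_n(\mathbf k)$.
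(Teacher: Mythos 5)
Your proposal is correct and follows essentially the same route as the paper: reduce to showing $h(\alpha)\in\mathbb Z$ for $\alpha\in\Delta$, note $h(S)=\{-1\}$ and $h(T)\subset\mathbb N$ via the exponents, invoke Proposition \ref{7.4} for spanning, and use the type $A$ fact (cited to \cite[A.2]{J0.5}) that a $\mathbf k$-spanning set of roots generates the root lattice over $\mathbb Z$, so that $\Delta\subset\mathbb Z S+\mathbb Z T$. The only difference is that you spell out the graph-theoretic justification of the type $A$ step, which the paper leaves to the reference.
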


\begin{rem}
\
  The above conclusion may fail if $S(\a)$ admits non-trivial semi-invariants, as the following example shows.

  \

  Extend the Heisenberg Lie algebra with the one non-trivial relation $[x,\,y]=z$ by an $\ad$-semisimple element $h$, satisfying $[h,\,x]=c\,x, [h,\,y]=(1-c)\,y\, : \, c \in \mathbb Q \setminus \{0,\,1\}$.  Then the resulting algebra $\mathfrak a$ is algebraic and Frobenius. With respect to the above basis, let $\zeta$ be the element dual to $z$.  Then $(h,\,\zeta)$ is an adapted pair for $\mathfrak a$, yet the eigenvalues of $\ad h$ are $\{c,\,(1-c),\,1,\,0\}$ which run over all rational values.  Though $Sy(\mathfrak a)$ and $Y(\mathfrak a)$ are polynomial they are not equal.

\end{rem}

\section{ Weierstrass sections.}\label{7.11}

Recall the definition of a Weierstrass section given in \ref{1.2}.

\

Before addressing the main goal of this section we take this opportunity of correcting some bad points in the presentation given in \cite [7.6-7.8]{J2}. In conformity with the notation of \cite [Sect. 7]{J2} we adopt the more general context of \cite [Sect. 7]{J2} of a connected algebraic group $\textbf{A}$ acting linearly and regularly on a finite dimensional vector space $X$.  In this $y+V$ rather than $\eta+V$ is used to denote a linear subvariety of $X$.  Let $\mathfrak a$ denote the Lie algebra of $\textbf{A}$.

 \subsection {Corrections}\label {7.11.1}

 \

 1)  In the statement of \cite[Prop. 7.8(i)]{J2} the hypothesis that $V$ is a complement to $\mathfrak a.y$ in $X$ was inadvertently forgotten.  It is clear that it was intended from the first line of the proof.

 \

 2) The comment ``equivalently $d=1$ in the above" occurring in the second paragraph of \cite[7.6]{J2} is incorrect (it was not used there). A counter-example was already provided in that paper \cite [11.4, Example 3]{J2}!

 \

 3) It was claimed in \cite [7.7]{J2} that $\mathscr S=\{s\in y+V\mid\mathfrak a.s\cap V=0\}$ is open in $y+V$. This is probably false.  Let us show however that it is a finite union of irreducible locally closed sets.  This serves the same purpose particularly concerning \cite [7.7]{J2}. Notably $\mathscr S_{reg}$ is open in $y+V$.

 \

 Set $T=y+V$.  Given $d \in \mathbb N$, set $T_d:=\{t \in T\mid\dim \mathfrak a.t=d\}$.  A standard argument (involving the complement to the zero set of appropriate minors - see \cite[Prop. 1.11.5]{Dix}), shows that $T_d$ is open in $T$ if $d$ takes its largest value.  Then for arbitrary $d$, induction shows that $T_d$ is locally closed in $T$.  Thus $T$ is a finite union of irreducible locally closed sets.  Let $U$ be such a subset of $T$, say $U \subset T_d$. Then $U':= \{u \in U\mid\mathfrak a.u\cap V=0\}=\{u \in U\mid\dim (\mathfrak a.u +V) = d +\dim V\}$.  The latter can be expressed as the complement to the zero set of appropriate minors and so is open in $U$. It is either empty or irreducible.  Hence the assertion.

 \

 4) Assume that $\textbf{A}.(y+V)$ is dense in $X$.  It was claimed in \cite [7.7]{J2} that $O:=\{s \in y+V\mid\mathfrak a.s+V=X\}$ is non-empty.  This is true and we explain why.

 \

  Let $T_{u,\,U}$ denote the tangent space at some $u \in U \subset X$.  Then for $s\in y+V$, $\mathfrak a.s=T_{s,\,\textbf{A}.s}, T_{s,\,y+V}=V,\, T_{s,\,X}=X$.  Consider the morphism $\varphi:\textbf{A}\times (y+V) \rightarrow X$ defined by the group action. By the hypothesis and \cite[Thm. 16.5.7 (ii)]{TY}, the tangent map $d\varphi_{(a,\,s)}\,:\,T_{(a,\,s),\,\textbf{A}\times(y+V)} \rightarrow T_{\varphi(a,\,s),\,X}$ is surjective at some point $(a,s) \in \textbf{A}\times (y+V)$.   Its image is $\mathfrak a.(a.s) +a.V=a.[(Ad a^{-1}(\mathfrak a)).s+V]=a.[\mathfrak a.s+V]$, which since $a \in \bf A$ is invertible, has the same dimension as $\mathfrak a.s+V$.  Thus $s \in O$, which is hence non-empty.

 \

 5) Assume that the algebra $R[X]$ of regular functions on $X$ has no proper semi-invariants (as an $\textbf{A}$ module).  Assume that $(y+V)_{reg}:=(y+V)\cap X_{reg}$ is non-empty, hence open dense in $y+V$.  Assume that the restriction map $R[X]^{\textbf{A}} \rightarrow R[y+V]$ induces an isomorphism of rings of fractions.  Then as explained in \cite [7.5]{J2} it follows that $\dim V$ is the codimension of an $\bf A$ orbit in $X_{reg}$, that is $\ell_X(\a)$ with notations of \ref{1.2}. Let $d'$ be the common denominator of the $\dim V$ generators of the algebra $R[y+V]$ when expressed as fractions in the elements of $R[X]^\textbf{A}$ and $D'$ the zero set in $y+V$ of $d'$. By construction $d'$ is the restriction of an element $d \in R[X]^\textbf{A}$. Let $D$ be the zero set of $d$ in $X$.   Obviously $D'=(y+V)\cap D$ and $(y+V)_{reg} \setminus (D'\cap(y+V)_{reg})$ is non-empty (since otherwise $D'$ would be equal to $y+V$
 and then $d'$ would be zero, which is impossible by construction).  Thus $R[X]^\textbf{A}$ separates the points of $(y+V) \setminus D'$, whilst $d$ further separates them from those of $D'$.

 \

 $(*)$.  In particular an $\textbf{A}$ orbit through a point of $(y+V)\setminus D'$ cannot pass through a different point of $y+V$.

 \

 \

 It was claimed in \cite [7.7]{J2} that as a consequence of the above, $\textbf{A}.(y+V)$ is dense in $X$.  This is true and we explain why.

 \

  Since $\textbf{A}\subset GL(X)$ is assumed connected and $V$ is a vector subspace of $X$, both $\textbf{A}$ and  $y+V$ are closed and irreducible. Thus ${\bf A}\times (y+V)$ is an irreducible subvariety of $GL(X)\times X$ (\cite[Thm. 3 of Section 3 in Chap. I]{Scha}).

  Let $\varphi\,:\,{\bf A}\times (y+V)\rightarrow X$ be the morphism defined by group action. Its image
${\bf A}.(y+V)$ is again irreducible and contains an open subset of its closure.

 By \cite[Thm. 7 of Section 6 in Chap. I]{Scha}  we obtain
 $$\dim {\bf A}.(y+V)\ge \dim({\bf A}\times (y+V))-\dim\varphi^{-1}(a.s), \forall a \in \textbf{A}, s \in y+V .\eqno {(**)}$$

  Take more particularly $s\in (y+V)_{reg}\setminus (D'\cap(y+V)_{reg})$.  By $(*)$ above, one has  for $a\in{\bf A}$, $\varphi^{-1}(a.s)=(a{Stab}_{\bf A}s,s)$. Moreover $s \in X_{reg}$, so $\dim{Stab}_{\bf A}s=\dim \textbf{A} +\dim V - \dim X $. Thus by $(**)$, $\dim{\bf A}.(y+V)=\dim X$, and so ${\bf A}.(y+V)$ is dense in $X$.

 \subsection {} \label {7.11.2}

From now on we revert to the hypotheses on $\a$ and $\bf A$ described at the beginning of this paper (see \ref{1}) 
and to $\eta+V$ denoting a linear subvariety of $X=\mathfrak a^*$.

\begin {lemma}    Assume that $Sy(\mathfrak a)=Y(\mathfrak a)$.  Given

\

 (i) The restriction map $\varphi$ induces an algebra isomorphism of $Y(\mathfrak a)$ onto the algebra $R[\eta+V]$ of regular functions on $\eta+V$.

\

Then

\

(ii)  ${\bf A}.(\eta+V)$ is dense in $\mathfrak a^*$.  In particular $(\eta+V)_{reg}\neq \emptyset$.

\

(iii) $\eta+V$ meets every co-adjoint orbit at most once and then transversally.

\end {lemma}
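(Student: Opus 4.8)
The plan is to deduce everything from the single hypothesis that restriction gives an algebra isomorphism $\varphi:Y(\a)\liso R[\eta+V]$. First I record that $\dim V=\ell(\a)$: since $Sy(\a)=Y(\a)$, the Chevalley--Rosenlicht theorem recalled in \ref{1.3} gives $\GKdim Y(\a)=\ell(\a)$, while $\GKdim R[\eta+V]=\dim V$, so the isomorphism forces equality. Moreover $R[\eta+V]$ is a polynomial ring, hence so is $Y(\a)$, say on homogeneous generators $p_1,\dots,p_{\ell(\a)}$. Their restrictions $p_i|_{\eta+V}$ are algebraically independent generators of the polynomial ring $R[\eta+V]=S(V^*)$ in $\dim V=\ell(\a)$ variables, so they form a global coordinate system on $\eta+V$; in particular the covectors $d(p_i|_{\eta+V})(s)=dp_i(s)|_V$ form a basis of $V^*=T^*_{s}(\eta+V)$ at every $s\in\eta+V$.

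This already yields (iii). For "at most once": if $s'=a.s$ with $s,s'\in\eta+V$ and $a\in{\bf A}$, then $p_i(s')=p_i(a.s)=p_i(s)$ for all $i$ by invariance, and since the $p_i|_{\eta+V}$ separate points we get $s=s'$. For transversality at such $s$ I must show $\a.s\cap V=0$. Identifying $(\a.s)^\perp$ with $\a^{s}$, each $dp_i(s)$ lies in $\a^{s}=(\a.s)^\perp$; so a nonzero $w\in\a.s\cap V$ would be killed by every $dp_i(s)$ while lying in $V$, contradicting that the $dp_i(s)|_V$ span $V^*$. Hence $\a.s\cap V=0$, i.e. the sum $\a.s+V$ is direct, which is exactly transversality in the sense of \ref{1.5}. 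I emphasise that these arguments use neither regularity of $\eta$ nor density.

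Next I treat (ii). Writing $\psi:{\bf A}\times(\eta+V)\to\a^*$ for the action morphism, the "at most once" property shows that the fibre of $\psi$ over $a.s$ is $a\,\Stab_{\bf A}(s)\times\{s\}$, of dimension $\dim\a^{s}$; the fibre-dimension theorem then gives $\dim\overline{{\bf A}.(\eta+V)}=\dim\a+\ell(\a)-\min_{s\in\eta+V}\dim\a^{s}$. Thus ${\bf A}.(\eta+V)$ is dense in $\a^*$ if and only if $\min_{s}\dim\a^{s}=\ell(\a)$, that is if and only if $(\eta+V)_{reg}\neq\emptyset$; and once density holds the same count shows the generic point of $\eta+V$ is regular, giving the "in particular". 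So (ii) reduces to exhibiting a single regular point of $\eta+V$.

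The main obstacle is therefore $(\eta+V)_{reg}\neq\emptyset$. Here I would use the quotient morphism $\pi:\a^*\to B:=\Spec Y(\a)$; the isomorphism $\varphi$ says precisely that $\pi$ restricts to an isomorphism $\eta+V\liso B$, so $\eta+V$ is the image of a section $\tau$ of $\pi$. Since the $dp_i(s)$ are linearly independent for every $s\in\eta+V$, the differential $d\pi_s$ is surjective along $\eta+V$, whence $\pi$ is smooth near $\eta+V$ and $\a^*=V\oplus\ker d\pi_s$ at each such $s$. On the other hand, as $Sy(\a)=Y(\a)$ has no proper semi-invariants, $\Fract Y(\a)=(\Fract S(\a))^{\bf A}$, so Rosenlicht's theorem furnishes a dense open ${\bf A}$-stable $U\subseteq\a^*$ on which $\pi$ is a geometric quotient; over a dense open of $B$ the fibre $\pi^{-1}(b)$ is then a single ${\bf A}$-orbit of dimension $\dim\a-\ell(\a)$, lying in $\a^*_{reg}$. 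The section point $\tau(b)$ lies in this regular orbit for generic $b$, whence $\tau(b)\in(\eta+V)_{reg}$. With $(\eta+V)_{reg}\neq\emptyset$ established, the corrected argument of \ref{7.11.1}(5) then applies to give density of ${\bf A}.(\eta+V)$ in $\a^*$, finishing (ii). The delicate point I expect to require care is exactly the last step: guaranteeing that the marked rational point $\tau(b)$ falls in the open orbit of its fibre rather than in the lower-dimensional boundary, which I would secure from the smoothness of $\pi$ along the section together with the geometric-quotient structure.
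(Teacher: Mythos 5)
Your treatment of (iii) is correct and in substance the same as the paper's, though phrased more cleanly: the paper picks generators $f_i$ of $Y(\mathfrak a)$ whose restrictions are linear coordinates on $\eta+V$ and computes $df_i(\xi)(v)=x_i(v)$ explicitly, whereas you observe that $\ell(\mathfrak a)$ algebra generators of a polynomial ring in $\ell(\mathfrak a)$ variables define an automorphism of affine space and hence have everywhere linearly independent differentials; both arguments then conclude from $df(\xi)\in(\mathfrak a.\xi)^{\perp}$. Your reduction of the density statement in (ii) to $(\eta+V)_{reg}\neq\emptyset$ via the fibre-dimension theorem is also sound.

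The gap is exactly where you flagged it, and the repair you propose does not close it. Rosenlicht's theorem gives a dense open $\mathbf{A}$-stable $U\subseteq\mathfrak a^*$ on which rational invariants separate orbits; it tells you that $\pi^{-1}(b)\cap U$ is a single regular orbit for generic $b$, not that $\pi^{-1}(b)$ itself is. Nothing you have established prevents $\eta+V$ from lying entirely inside the proper closed $\mathbf{A}$-stable set $\mathfrak a^*\setminus U$, in which case $\tau(b)$ never meets the good orbit of its fibre. Smoothness of $\pi$ along $\eta+V$ does not help: surjectivity of $d\pi_s$ only says the fibre through $s$ is smooth of codimension $\ell(\mathfrak a)$ at $s$, and since $\mathfrak a.s\subseteq\ker d\pi_s$ this merely recovers the trivial bound $\dim\mathfrak a^{s}\geq\ell(\mathfrak a)$; the orbit through $s$ can still be strictly smaller than the fibre component containing it. The missing ingredient is the theorem of Dixmier--Duflo--Vergne that every nonzero $\ad$-stable ideal of $S(\mathfrak a)$ contains a nonzero semi-invariant, which is what the paper invokes (via \cite[7.9]{J2}): if $\overline{\mathbf{A}.(\eta+V)}\neq\mathfrak a^*$ (or if $\eta+V\subseteq\mathfrak a^*\setminus U$), the ideal of that proper closed $\mathbf{A}$-stable set would contain a nonzero semi-invariant, hence under $Sy(\mathfrak a)=Y(\mathfrak a)$ a nonzero invariant vanishing on $\eta+V$, contradicting injectivity of $\varphi$. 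With that in hand density comes first, and $(\eta+V)_{reg}\neq\emptyset$ follows at once because $\mathfrak a^*_{reg}$ is open, dense and $\mathbf{A}$-stable --- the reverse order to the one you attempted.
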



\begin {proof}  As explained in \cite [7.9]{J2}, the first part of $(ii)$ is a consequence of injectivity in $(i)$ and a theorem of Dixmier, Duflo and Vergne.  It implies that $(\textbf{A}.(\eta+V))_{reg} \neq \emptyset$ (since otherwise, by the first part of $(ii)$, we would have $\a^*_{reg}=\emptyset$ which is never the case), hence the second part of $(ii)$. The first part of $(iii)$ is a consequence of surjectivity.  It remains to prove the last part of $(iii)$.  For this (see \ref{1.5} and recalling that $T_{\xi,\,\eta+V}=V$) we must prove that $\mathfrak a.\xi\cap V=\{0\}$, for all $\xi \in \eta+V$.

Take $ \xi \in \mathfrak a^*$.  As noted in \cite [5.4$(**)$]{JS} for example, one has $df(\xi) \in (\mathfrak a.\xi)^\perp$ for all $f\in Y(\a)$.  Thus
$$df(\xi)(v)=0, \forall f \in Y(\a), v \in \a.\xi\cap V. \eqno {(*)}$$

Fix $\xi \in \eta+V$.


%
%
%

Suppose $\eta \in V$.  Then $R[\eta+V]=S(V^*)$ and the isomorphism in $(i)$ implies that $Y(\mathfrak a)=S(\mathfrak z)$, where $\mathfrak z$ is the centre of $\mathfrak a$.  Take $v \in \mathfrak a.\xi\cap V$.  Then for all $z \in \mathfrak z$ we have $z(v)=0$, which by surjectivity of $\varphi$ forces $v=0$.

It remains to consider the case $\eta \notin V$.

Set $\ell = \dim V, n= \dim \mathfrak a$. Let $(x_i^*)_{i=1}^{\ell}$ be a basis of $V$, complete $(x_i^*)_{i=1}^{\ell}\cup\{\eta\}$ to a basis of $\a^*$ and take its dual basis $(x_i)_{i=1}^n$ in $\a$.
Then $\{x_i\}_{i=1}^\ell$ is a basis of $V^*$ and $y:=x_{\ell+1}$ vanishes on $V$ equaling $1$ on $\eta$.


In particular $y(\xi)=y(\eta)=1$ and so for any polynomial $p \in \textbf{k}[y]$ one has $p(\xi)=p(\eta)$.

Let $\Phi:Y(\mathfrak a)\rightarrow R[\textbf{k}\eta\oplus V]$ be the restriction map. Given $f \in Y(\mathfrak a)$, we have $\Phi(f) \in \textbf{k}[x_1,x_2,\ldots,x_\ell,y]$.  The surjectivity of $\varphi$ means
that for all $i =1,2,\ldots,\ell$, there exists $f_i \in Y(\mathfrak a)$ such that for all $v\in V$, $\varphi(f_i)(\eta+v)=x_i(v)=x_i(\eta+v)$.

Given $I \in \mathbb N^\ell$, let  $x^I$ denote the corresponding monomial in $\{x_i\}_{i=1}^\ell$. Then there exist a finite set $\mathscr F \subset \mathbb N^\ell$ and polynomials $p,q_I:I\in \mathscr F$ of $y$ satisfying $p(\eta)=1,q_I(\eta)=0:I \in \mathscr F$ such that $\Phi(f_i)=px_i+\sum_{I \in \mathscr F} q_Ix^I$.

By choice of the basis of $\mathfrak a$ we have for all $v\in V$, $x_j(v)=0$ for $j>\ell$ and $x_j(\xi)=0$ for all $j>\ell+1$. Thus for all $j\leq \ell$ one has $(\partial f_i/\partial x_j)(\xi)=(\partial \Phi(f_i)/\partial x_j)(\xi)$. Then through the above expression for $\Phi(f_i)$ and the choice of $p,q_I:I \in \mathscr F$, we obtain
$$df_i(\xi)(v)=\sum_{j=1}^\ell x_j(v)(\partial f_i/\partial x_j)(\xi)=x_i(v), \forall  v \in V.$$

Thus for $v \in V$, the condition $df_i(\xi)(v)=0$, for all $i=1,2,\ldots,\ell$ implies $v=0$.

 It remains to take $v \in \mathfrak a.\xi\cap V$ and to apply $(*)$,

\end {proof}

 \subsection {} \label {7.11.3}

 In the language of \ref{1.5}, the above lemma means that if ${S}(\mathfrak a)$ has no proper semi-invariants, then a Weierstrass section for $\a$ is an affine slice for $\a$.

 Notice that $(\eta +V)_{reg}$ being non-empty, implies that it is open dense in $\eta+V$. Thus $D'':=(\eta +V) \setminus (\eta+ V)_{reg}$ is closed of codimension $\geq 1$ (for $V$ not reduced to zero). We remark that Example 3 of \cite [11.4]{J2} shows that the hypothesis of Lemma \ref {7.11.2} does not exclude $D''$ being of codimension $1$.  In this example one may verify that $(iii)$ holds by explicit computation.

 If we assume that $D''$ has codimension in $\eta+V$ at least $2$, then the converse to Lemma \ref {7.11.2} holds. Indeed conditions $(ii)$, $(iii)$ of Lemma \ref {7.11.2} imply that the locally closed subset $\mathscr S=(\eta+V)_{reg}$ is a slice to the action of $\bf A$ on $\a^*$ which is affine in the sense of \ref{1.5}.  The additional hypotheses of ${S}(\mathfrak a)$ having no proper semi-invariants and $D''$ being of codimension $\geq 2$, complete the hypotheses of \cite [Prop. 7.4]{J2} from which $(i)$ of Lemma \ref {7.11.2} results.

 \subsection {} \label {7.11.4}

It seems unlikely that the converse to Lemma \ref {7.11.2} holds in general. Let us examine however what seems to be the most natural line of attack.


Set $T=\eta+V, X=\mathfrak a^*$.  Let $m_1=\max_{t \in T}\dim \mathfrak a.t$ and set $O_1:=\{t \in T|\dim \mathfrak a.t=m_1\}$, which is open in $T$ and non-empty.  We repeat the construction with $C_2:=T\setminus O_1$. This expresses $T$ as a finite disjoint union of  locally closed sets $O_i,\,1\le i\le r$,
with $C_i:=\sqcup_{j=i}^rO_j$ ($C_1=T$) the closure of $O_i$ in $T$ and such that the dimension of an $\textbf{A}$ orbit through $O_i$ takes some constant value $m_i$.   Set $n_i:=\dim O_i$.


 For each $i \in I=\{1,\,\ldots,\,r\}$, let $E_i$ be the union of the irreducible components of $C_i\setminus O_i$ of codimension $1$ in $C_i$.  By Krull's theorem the ideal of definition $I(E_i)$ of the closed set $E_i$ is principal, that is $I(E_i)=d_iR[C_i]$, for some $d_i \in R[C_i]$.

 Let $A$ be a subalgebra of $R[T]$, that is to say we have an injective homomorphism $\varphi:A \rightarrow R[T]$. For all $i \in I$ let $p_i$ be the canonical projection of $R[T]$ onto $R[C_i]$ and set $\varphi_i=p_i\varphi$.

 \begin {lemma} Suppose that $R[T]/\varphi(A)$ is finitely generated as an $A$ module.  Further for all $i \in I$ suppose that

 \

 (i) $d_i \in \varphi_i(A)$, that is $d_i=\varphi_i(d_i'')$ for $d_i''\in A$.

 \

 (ii) for all preimage $d_i''$ of $d_i$, $\varphi_i$ induces a surjection of $A[d_i''^{-1}]$ onto $R[C_i][d_i^{-1}]$.
%
%

 \

 Then $\varphi$ is surjective.
 \end {lemma}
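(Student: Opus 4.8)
The plan is to prove the equivalent assertion $\varphi(A)=R[T]$ by induction on the number $r=|I|$ of strata. Throughout I identify $A$ with its image $\varphi(A)\subseteq R[T]$ and set $J_i=I(C_i)$, the ideal of definition of $C_i$ in $R[T]$, so that $R[C_i]=R[T]/J_i$, the projection $p_i$ is reduction modulo $J_i$, and the surjectivity of $\varphi_i$ means exactly that $R[T]=\varphi(A)+J_i$. The one consequence of finiteness needed at the top level is this: applying hypothesis (ii) at $i=1$, where $C_1=T$, $\varphi_1=\varphi$ and $\varphi(d_1'')=d_1$, gives $\varphi(A)[d_1^{-1}]=R[T][d_1^{-1}]$; hence the localization at $d_1''$ of the finitely generated $A$-module $R[T]/\varphi(A)$ vanishes, and therefore $d_1^{N}R[T]\subseteq\varphi(A)$ for some $N\in\mathbb N$.

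For the base case $r=1$ one has $C_2=\emptyset$, so $E_1=\emptyset$, $I(E_1)=R[T]$ and $d_1$ may be taken to be $1$; then hypothesis (ii) reads $\varphi(A)=\varphi(A)[d_1^{-1}]=R[T][d_1^{-1}]=R[T]$, which is the conclusion. For the inductive step ($r\ge2$) I would invoke the inductive hypothesis for the $(r-1)$-stratum datum $\varphi_2\colon A\to R[C_2]$ with chain $C_2\supseteq C_3\supseteq\cdots\supseteq C_r$: the sets $E_j$ and elements $d_j$ for $j\ge2$ are literally unchanged, hypotheses (i) and (ii) for $j\ge2$ are assumed, and $R[C_2]/\varphi_2(A)$ is a quotient of $R[T]/\varphi(A)$, hence finitely generated over $A$. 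This yields $\varphi_2(A)=R[C_2]$, that is $R[T]=\varphi(A)+J_2$.

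It remains to remove $J_2$, and the one genuinely geometric input enters here: since $E_1$ is by definition contained in $C_1\setminus O_1=C_2$, one has $J_2=I(C_2)\subseteq I(E_1)=d_1R[T]$, so every element of $J_2$ is divisible by $d_1$ in $R[T]$. Given $b\in R[T]$, I write $b=\varphi(a_0)+j_0$ with $j_0\in J_2$, then $j_0=d_1b_1$ with $b_1\in R[T]$, and iterate the decomposition $b_1=\varphi(a_1)+d_1b_2$, and so on; after $N$ steps this produces $b=\sum_{j=0}^{N-1}d_1^{\,j}\varphi(a_j)+d_1^{N}b_N$. Each term $d_1^{\,j}\varphi(a_j)=\varphi\big((d_1'')^{j}a_j\big)$ lies in $\varphi(A)$ because $d_1=\varphi(d_1'')\in\varphi(A)$, while the tail satisfies $d_1^{N}b_N\in d_1^{N}R[T]\subseteq\varphi(A)$ by the first paragraph. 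Hence $b\in\varphi(A)$, completing the induction.

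The main obstacle is exactly the passage from the localized surjectivity of hypothesis (ii) back to genuine surjectivity: inverting $d_1$ only yields $d_1^{N}R[T]\subseteq\varphi(A)$, and one cannot in general \emph{divide} by $d_1$ inside the subring $\varphi(A)$. What makes it work is the containment $J_2\subseteq d_1R[T]$ coming from $E_1\subseteq C_2$, which lets every division be performed in the ambient normal ring $R[T]$ while the finiteness hypothesis guarantees that the remainder term is eventually absorbed into $\varphi(A)$. I would also take care, in the inductive step, to verify that the data attached to $C_2$ truly form an instance of the lemma; the only point requiring comment is the finite generation of $R[C_2]/\varphi_2(A)$, which is inherited from that of $R[T]/\varphi(A)$ via the restriction map.
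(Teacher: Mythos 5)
Your proof is correct and follows essentially the same route as the paper: induction on the number of strata, using finite generation plus hypothesis (ii) at $i=1$ to get $d_1^{N}R[T]\subseteq\varphi(A)$, and the containment $I(C_2)\subseteq I(E_1)=d_1R[T]$ to get $R[T]=\varphi(A)+d_1R[T]$. The paper packages the final step as the statement $M=d_1M$ with $d_1^{N}M=0$ for the finitely generated module $M=R[T]/\varphi(A)$, which is exactly your explicit iteration.
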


 \begin {proof}  It is clear that the hypothesis of finite generation passes to each quotient $R[C_i]/\varphi_i(A)$.  Then through induction on $\mid\! I\!\mid$ it suffices to consider the case $r=\mid\! I\!\mid=2$.  If $d_1=1$, there is nothing to prove.  Otherwise we write $R[C_1]=R, d=d_1$. Note that $d_2=1$ (and we can take $d''_2=1$ too) since $O_r=C_r$.

 Identify $A$ with its image in $R$.  By $(i)$, $d \in A$.  Consider $M:=R/A$ as an $A$ module. By $(ii)$ with $i=1$, we obtain an isomorphism $A[d^{-1}] \iso R[d^{-1}]$, thus $M$ has $d$-torsion, so being finitely generated is annihilated by a power of $d$.  By $(ii)$ with $i=2$, taking $d_2''=1$, we obtain a surjection $A\twoheadrightarrow R[C_2] \twoheadrightarrow R/Rd$, that is  $R=A+dR$ and so $dM=(dR+A)/A=R/A=M$, forcing $M=0$, as required.

 \end {proof}

\subsection {} \label {7.11.5}

We now show how the above lemma gives under two further conditions, a converse to Lemma \ref {7.11.2}.  Let $\eta +V$ be a linear subvariety of $\mathfrak a^*$ in \ref {7.11.4} and retain the notation there. In addition let $S_i$ denote the closure of $\textbf{A}.O_i$.  Set $A=Y(\mathfrak a)$ and let $\varphi: A \rightarrow R[\eta+V]$ be defined by restriction of functions.

\begin {lemma}  Assume that $Sy(\mathfrak a)=Y(\mathfrak a)$.  Given

\

(i)  ${\bf A}.(\eta+V)$ is dense in $\mathfrak a^*$.

\

(ii) $\eta+V$ meets every co-adjoint orbit at most once and then transversally.

\

(iii) $S_i$ is smooth for all $i \in I$.

\

(iv) $R[\eta+V]/\varphi(Y(\mathfrak a))$ is finitely generated as an $Y(\mathfrak a)$ module.

\

(v) For all $i\in I$, the algebra of $\bf A$ semi-invariants (resp. invariants) in $R[S_i]$ is an image by restriction of functions of $Sy(\a)$ (resp. $Y(\a)$).

\

Then

\

(vi) $\varphi$ is an isomorphism of $Y(\mathfrak a)$ onto $R[\eta+V]$.

\end {lemma}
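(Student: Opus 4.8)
The plan is to invoke Lemma \ref{7.11.4} with $A = Y(\mathfrak a)$ and $\varphi$ the restriction map to $T = \eta + V$, so that the whole argument reduces to verifying its two numbered hypotheses (i) and (ii) for each stratum $C_i$. Hypothesis (iv) here is precisely the finite-generation assumption needed in Lemma \ref{7.11.4}, so that carries over directly. The substance of the proof is therefore to show: first, that each divisor $d_i$ cutting out the codimension-one part $E_i$ of $C_i \setminus O_i$ lies in the image $\varphi_i(Y(\mathfrak a))$; and second, that $\varphi_i$ becomes an isomorphism after inverting $d_i$.

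First I would set up the geometry of the strata. The key structural input is hypothesis (i), that $\mathbf{A}.(\eta+V)$ is dense in $\mathfrak a^*$, together with hypothesis (ii), which says $\eta+V$ is transversal to orbits and meets each at most once. On the open stratum $O_1 = (\eta+V)_{reg}$ the orbit map is, by (ii), generically injective with image dense in $\mathfrak a^*$, so the restriction of $Y(\mathfrak a)$ to $O_1$ should induce an isomorphism of fields of fractions; this is exactly the $i=1$ instance of hypothesis (ii) of Lemma \ref{7.11.4} after localizing at $d_1$. To identify $d_i$ with a restriction of an invariant I would use hypothesis (v): the $\mathbf{A}$-invariants on $R[S_i]$ come by restriction from $Y(\mathfrak a)$, and the divisor $d_i$ on $C_i$ is naturally the pullback of a semi-invariant defining the boundary divisor inside the closed orbit-closure $S_i = \overline{\mathbf{A}.O_i}$. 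Since $Sy(\mathfrak a)=Y(\mathfrak a)$, a semi-invariant is an invariant, so $d_i$ lifts to $Y(\mathfrak a)$, giving hypothesis (i).

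The localized surjectivity in hypothesis (ii) of Lemma \ref{7.11.4} is where smoothness enters. On the locus $C_i \setminus E_i$ (the part of $C_i$ where $d_i$ is invertible) each $\mathbf{A}$-orbit through $O_i$ meets $C_i$ in a set whose codimension I would control using that $S_i$ is smooth (hypothesis (iii)). Smoothness of $S_i$ lets me apply a normality/Zariski-main-theorem type argument: the restriction $\varphi_i$ localized at $d_i$ identifies $R[C_i][d_i^{-1}]$ with the $\mathbf{A}$-invariant functions on the open dense union of orbits meeting $C_i \setminus E_i$, and by (v) these invariants are images of $Y(\mathfrak a)[d_i''^{-1}]$. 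Transversality (ii) guarantees the map $\mathbf{A} \times (C_i \setminus E_i) \to S_i \setminus (\text{boundary})$ is dominant with the right fiber dimension, so that the invariant functions separate the points and $\varphi_i$ is surjective after inverting $d_i$.

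Having verified hypotheses (i), (ii), and the finite-generation requirement of Lemma \ref{7.11.4}, I conclude from that lemma that $\varphi$ is surjective; combined with injectivity (which holds because $Sy(\mathfrak a)=Y(\mathfrak a)$ implies the fraction fields agree and $\eta+V$ is a rational slice, in the language of \ref{1.5}), this gives that $\varphi$ is an isomorphism, which is conclusion (vi). The main obstacle I anticipate is the localized surjectivity step: matching the $\mathbf{A}$-invariant functions on the smooth stratum-closure $S_i$ with the localized invariants $Y(\mathfrak a)[d_i''^{-1}]$ requires carefully using smoothness and transversality to rule out the invariants failing to generate beyond the divisor $d_i$, and this is exactly the point where the single principal divisor from Krull's theorem must suffice. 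Getting the bookkeeping of the several strata $C_i$ consistent, so that the inductive reduction in Lemma \ref{7.11.4} applies cleanly, is the delicate part.
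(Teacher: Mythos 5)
Your proposal is correct and follows essentially the same route as the paper's proof: reduce to Lemma \ref{7.11.4}; use transversality together with smoothness of $S_i$ to show the action morphism $\textbf{A}\times O_i\to\textbf{A}.O_i$ has everywhere surjective differential, hence is smooth with open image in $S_i$ (the paper then invokes a result of Hinich to obtain $R[\textbf{A}.O_i]^{\textbf{A}}\simeq R[O_i]$, which is the precise form of your ``normality/ZMT'' step); and use hypothesis $(v)$ with $Sy(\mathfrak a)=Y(\mathfrak a)$ to conclude the divisor generator $d_i$ is invariant and lifts to some $d_i''\in Y(\mathfrak a)$. The only point you leave implicit that the paper makes explicit is the identification, up to a unit, of $\varphi_i'(d_i)$ with the local equation $d_i'$ of $O_i$ in $C_i$, which is what lets the localization at $d_i''$ on the $Y(\mathfrak a)$ side match the localization defining $R[O_i]$ inside $R[C_i]$.
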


\begin {proof}  By $(i)$, $\varphi$ is an embedding.

 For all $i \in I$, let $O_i^j:j \in J_i$ denote the set of irreducible components of $O_i$.

 Consider the morphism $\psi:\textbf{A}\times O_i^j \rightarrow \textbf{A}.O_i^j$ defined by the action. By the first
 part of $(ii)$ the fibre over the point $a.s: a \in \textbf{A}, s \in O_i^j$ is just
$(a\Stab_{\textbf{A}}s,\,s)$.  By
construction this has constant dimension and so from \cite [Thm. 7 of Sect. 6 in Chap. I]{Scha} it follows that $\dim S_i=\dim \textbf{A}.O_i=m_i+n_i$ in the notation of \ref {7.11.4}. On the other hand the image of the tangent map $d\psi_{(a,\,s)}:T_{(a,\,s),\,\textbf{A} \times O_i } \rightarrow T_{a.s,\,S_i}$ has dimension independent of $a \in \textbf{A}$.  Moreover by transversality the image at the identity in $\bf A$ is the direct sum of the tangent space to the orbit through $s \in O_i$, which has dimension $m_i$ and the tangent space $T_{s,\,O_i}$ which has dimension at least $\dim O_i=n_i$.  Thus the dimension of the image of $d\psi_{(a,\,s)}$ is at least $\dim S_i$, for all $(a,\,s) \in \textbf{A}\times O_i$.

 We conclude from $(iii)$ that $d\psi_{(a,\,s)}$ is surjective for all $(a,\,s) \in \textbf{A} \times O_i$.  Thus $\psi$ is a smooth map whose image $\textbf{A}.O_i$ is open in $S_i$.  Then by a result of Hinich \cite [Prop. 12.3]{J2} restriction gives an isomorphism $\varphi'_i:R[\textbf{A}.O_i]^\textbf{A}\iso R[O_i]$.

 Let $D_i$ be the union of the irreducible components of codimension one in $S_i$ of the $\textbf{A}$ invariant closed set $S_i \setminus \textbf{A}.O_i$.  Its ideal of definition is principal with generator $d_i\in R[S_i]$ which must be a semi-invariant.
 
Now $S_i$ is a closed $\textbf{A}$ invariant subvariety of $\mathfrak a^*$, and by construction, $\a^*=S_1=\overline{{\bf A}.C_1}$
 by $(i)$ and  for all $2\le i\le r$, $S_i=\overline{{\bf A}.C_i}=S_{i-1}\setminus{\bf A}. O_{i-1}=\sqcup_{j=i}^r{\bf A}.O_j$. Moreover by $(v)$ and the hypothesis that $Sy(\a)=Y(\a)$ we deduce that  $d_i\in R[S_i]^{\bf A}$.
 


Then $R[\textbf{A}.O_i]=R[S_i][d_i^{-1}]$ and so $R[\textbf{A}.O_i]^\textbf{A}=R[S_i]^\textbf{A}[d_i^{-1}]$.  Similarly we can write $R[O_i]=R[C_i][d_i^{\prime-1}]$, for some $d_i' \in R[C_i]$.  Thus $\varphi'_i$ induces an isomorphism of $R[S_i]^\textbf{A}[d_i^{-1}]$ onto $R[C_i][d_i^{\prime-1}]$.   Since in addition $\varphi'_i$ induces an embedding to $R[S_i]^\textbf{A}$ into $R[C_i]$, it follows that $\varphi_i'(d_i)=d_i'u_i$ for some unit $u_i \in R[C_i]$.  

Yet $R[C_i][{d'_i}^{-1}]=R[C_i][({d'}_iu_i)^{-1}]$ so we may assume $u_i=1$ without loss of generality.

 Identifying $d_i\in R[S_i]$ with its image $\varphi'_i(d_i)$ in $R[C_i]$ and setting $d_i={d_i''}_{\mid C_i}=p_i\varphi(d_i'')=\varphi_i(d_i'')$ with $d_i''\in Y(\a)$ (via the surjection in $(v)$), we conclude that $\varphi_i$ induces a surjection of $Y(\a)[d_i''^{-1}]$ onto $R[C_i][d_i^{-1}]$.  Thus conditions $(i)$ and $(ii)$ of Lemma \ref {7.11.4} are satisfied.   Finally $(iv)$ implies the initial hypothesis of Lemma \ref {7.11.4}.  Then $(vi)$ of the present lemma follows from the conclusion of Lemma \ref {7.11.4}.

\end {proof}

\subsection {Comments} \label {7.11.6}

Recall notations of \ref{7.11.3}.
It might seem surprising that the general case does not seem to go through without significant extra conditions, whilst the special case when $D''$ has codimension $\geq 2$ is rather easy.  On the other hand codimension $2$ conditions are somewhat ubiquitous in invariant theory.

It may be that the condition that $S_i$ be smooth can be weakened to just $\textbf{A}.O_i$ being smooth.  By condition $(ii)$ of Lemma \ref {7.11.5}, $\textbf{A}.O_i$ is a fibration with fibres being $\textbf{A}$ orbits of constant dimension with base $O_i$.  Yet this may not be too helpful as it is not so obvious if $O_i$ is smooth.  Indeed the latter is given by the non-vanishing of certain minors and vanishing of others, so is unlikely to be smooth.

The conclusion in the proof of Lemma \ref {7.11.5} that the image of $d_i$ and $d_i'$ are proportional implies (taking just $i=1$) that $D''$ has codimension $\geq 2$ if and only if $\textbf{A}.(\eta+V)\setminus{\bf A}.(\eta+V)_{reg}$ has codimension $\geq 2$.

The only examples we have when $D''$ does not have codimension $\geq 2$ are when $\mathfrak a^*_s:=\mathfrak a^* \setminus \mathfrak a^*_{reg}$ has codimension $1$.  We had thought that this may be a general fact and indeed obtain following a comment in Popov \cite [Thm. 2.2.15 b)]{P}.  However here Popov is assuming that the restriction map $\varphi$ is smooth at $\eta$.  In any case if one assumes that $\textbf{A}.(\eta+V) \subset \mathfrak a^*_{reg}$, then such a conclusion is immediate.  However the latter condition is only satisfied rather rarely.

 \subsection {} \label {7.11.7}

  Unfortunately we do not have an example of an affine slice $\eta+V$ for $\a$ with $D''=\eta+V\setminus (\eta+V)_{reg}$ of codimension $1$ which is not a Weierstrass section to justify the extra conditions imposed in Lemma \ref {7.11.5}.


 The following example is of a slice $\mathscr S$ to the action of $\bf{A}$ on $\a^*$ which is of codimension $0$ in a linear subvariety $\eta+V$. It is noted that $\bf{A}$ orbits may pass more than once through $D:=\eta+V\setminus \mathscr S$ and moreover need not pass transversally. Thus $\eta+V$ is \textit{not} an affine slice for $\a$ (whilst the locally closed subset $\mathscr S$ is an affine slice).   Thus by Lemma \ref {7.11.2}, $\eta+V$ cannot be a Weierstrass section and so we cannot deduce that $Y(\mathfrak a)$ is polynomial. In fact  Dixmier \cite {Dix0.1} verified by explicit computation that $Y(\a)$ is not polynomial.

 Recall example 4 of \cite [11.4]{J2}.   In this $\mathfrak a$ is the standard filiform of dimension $5$.  It has basis $\{x_i\}_{i=1}^5$ with relations $[x_1,x_i]=x_{i+1}\,:\,i=2,\,3,\,4$, all other brackets being zero. $\a$ is nilpotent and $Y(\mathfrak a)$ is polynomial after localisation at $x_5$.  Let $\{y_i\}_{i=1}^5$ be a dual basis, set $\eta=0$ and $V$ the span of $y_2,\,y_3,\,y_5$.  Let $D$ be the zero locus of the invariant $x_5$ in $V$. One has $D=\mathbb Cy_2+\mathbb Cy_3$. Set $\mathscr S = V\setminus D$.

 As noted in loc cit, $\mathscr S = \mathscr S_{reg}$ and the $\textbf{A}$ orbits through $V$ are transversal (exactly) on $\mathscr S$, those through $\mathscr S$ are separated by $Y(\mathfrak a)$ and $\mathfrak a^*\setminus \textbf{A}.\mathscr S_{reg}$ is the zero locus of the invariant $x_5$.

 Let us examine the orbits through $D$.

One has $(\ad x_1)(y_3)=-y_{2}$ and $(\ad x_2)(y_3)=y_1$. The coadjoint action of the other generators of $\a$ on $y_3$ is zero and for all $x_i\in\a$,  $(ad\,x_i)(y_2)=0$.

Thus $$e^{d \ad x_2}e^{c \ad x_1}(ay_3+by_2)=ay_3+(b-ac)y_2+ady_1.$$

This vector lies in $D$ if and only if $ad=0$ that is $a=0$ or $d=0$.
Moreover equating this with a corresponding expression for primed quantities we obtain
$$a=a', b-ac=b'-a'c'.$$

Thus $$a=a', \quad b-b'=a(c-c').$$

 If $a=0$, then $b=b'$, so $D$ meets every ${\bf A}$ orbit of an element in $\mathbb Cy_2$ exactly once.  Otherwise $b-b'$ can be arbitrary.  Thus $ay_3+\mathbb Cy_2: a\neq 0$ lies on a single ${\bf A}$ orbit and the intersection of $D$ with the ${\bf A}$ orbit of an element $ay_3+by_2$, ($a\neq 0$) is $ay_3+\mathbb Cy_2$.

 In conclusion, $D$ admits a one parameter family of orbits which pass just once through $D$ and a one-parameter family of orbits each of which meets $D$ in a line.

Again $(d\ad x_2 +c \ad x_1)(ay_3+by_2)=ady_1-acy_2$, which lies in $D$ if and only if $ad=0$.

 Either $a=0$ or $d=0$.  In the first case the orbits cut $D$ transversally (at exactly one point).  In the second case transversality fails exactly when $c\neq 0$. Indeed $(c\ad x_1)(ay_3) =-cay_2\in D$, which is non-zero if $c\neq 0$.

Thus both parts of \ref {7.11.2}$(iii)$ fail.

 Finally one may check that the fundamental (semi)-invariant of $\mathfrak a$ is scalar and so $\mathfrak a^* \setminus \mathfrak a^*_{reg}$ has codimension $\geq 2$ (by \cite[4.1]{JS}).  Consequently $\mathscr S$ does not meet all the regular orbits in $\mathfrak a^*$.

\end{document}